\crefname{hypothesis}{Hypothesis}{Hypotheses}
\title{A Posteriori Error Estimates with Boundary Correction for a Cut Finite Element Method 
}
\author{Erik Burman
	\thanks{Department of Mathematics, 
	University College London, Gower Street, London, UK--WC1E  6BT, United Kingdom 
  	(\email{e.burman@ucl.ac.uk})}
\and 
	Cuiyu He
	\thanks{Department of Mathematics, University College London, Gower Street, London, 
	UK--WC1E  6BT, United Kingdom
  	(\email{c.he@ucl.ac.uk})}
\and
	Mats G. Larson
	\thanks{Department of Mathematics and Mathematical Statistics, Ume\aa \hspace{1mm}University,
	SE-90187 Ume\aa, Sweden (\email{mats.larson@umu.se})}
	}
\def\IR{\mathbb R}
\newcommand{\bfn}{\boldsymbol n}
\newcommand{\bfx}{\boldsymbol x}
\newcommand{\bfp}{\boldsymbol p}
\newcommand{\bn}{\boldsymbol n}
\newcommand{\bfnu}{\boldsymbol \nu}
\newcommand{\mcF}{\mathcal{F}}
\newcommand{\tn}{|\mspace{-1mu}|\mspace{-1mu}|}
\newcommand{\DnF}{{D_{{\boldsymbol n}_F}}}
\numberwithin{equation}{section}
\newtheorem{lem}{Lemma}[section]
\newtheorem{thm}{Theorem}[section]
\newtheorem{rem}{Remark}[section]
\newtheorem{example}{Example}[section]
\newcommand{\jump}[1]{[\![#1]\!]}
\def\cT{{\mathcal T}}
\def\cE{{\mathcal E}}
\def\O{{\Omega}}
\def\o{{\omega}}
\begin{document}

\maketitle

\begin{abstract}
In this work we study a residual based a posteriori error estimation
for the CutFEM method applied to an elliptic model problem. We
consider the problem with non-polygonal boundary and the analysis
takes into account the geometry and data approximation on the
boundary. The reliability and efficiency are theoretically
proved. Moreover, constants are robust with respect to how the domain
boundary cuts the mesh.
\end{abstract}

\begin{keywords}
  CutFEM, A Posteriori Error Estimation, AMR
\end{keywords}

\begin{AMS}
  68Q25, 68R10, 68U05
\end{AMS}

\section{Introduction}

Meshing and re-meshing procedures remain an important challenge in
computational methods, since they can be  very computationally
expensive for the finite element methods, particularly for problems
with complex geometries and interfaces that move during the
computational process. Several methods have been invented to
alleviating the meshing procedure of the exact domain. Fictitious
domain methods were introduced in the finite element context in 
the papers by Glowinski et al. \cite{GP92} and unfitted finite
element methods in the works by Barrett and Elliott \cite{BE84}. 

Since these seminal works many approaches have been suggested on how to
integrate the geometry data in finite element computations in a way
that reduces the meshing effort. For instance the fat boundary method
by Bertoluzza et al. \cite{BIM05}, the fictitious domain methods using Lagrange
multipliers inspired by extended finite element methods, pioneered by
Haslinger and Renard \cite{HR09,
  BH10a}, or the cut finite element method using Nitsche's method,
introduced by Hansbo and Hansbo and further developed in the fictitious
domain framework by various authors
\cite{HH02,Bu10, EDH10, BBH11, BH12}.

Cut finite element methods have been extensively studied in the recent
years, both in the context of interface problems and for
fictitious domain methods \cite{BCHLM15}. We restrict the discussion herein to the
fictitious domain case. The
methodology alleviates the meshing process by employing a background mesh
that can be highly structured and letting the domain boundary cut
through the elements of the mesh. In cells cut by the boundary the equations are integrated
only on the intersection of the element with the physical domain. The
boundary conditions are then imposed either using Lagrange multipliers
or Nitsche's method and stability with respect to the cut may be
ensured using a ghost penalty stabilization \cite{Bu10}. The
method was originally designed for continuous approximation spaces, but
has been adapted for various other methods, e.g, Discontinuous
Galerkin method \cite{JoLa13, GM18}, Hybrid High order Methods
\cite{BE19} or Isogeometric analysis \cite{ELL18}.
It has been applied to a number of different partial differential
equations, e.g, incompressible elasticity/Stokes' equations
\cite{BuHa14, MLLR14, BCM15, GO18}, linear elasticity \cite{hansbo2017cut},
Helmholtz equations \cite{Swift18},
time dependent parabolic problems on moving domains \cite{HLZ15}, Oseen's
problem \cite{MSW18, WSMW18} and other fluid models \cite{SW14, SSKW16}. It has also been applied successfully to shape optimization
problems \cite{BEHLL17, BEHLL18, BWB18} and other advanced engineering applications \cite{BTB19}.

Typically, in cut finite element methods, a domain with curved
boundary is approximated using a piecewise affine boundary
approximation. This gives a sufficiently good geometry approximation
for piecewise affine elements, but if higher order elements are used
the geometry approximation must be improved. To alleviate the
integration problem resulting from the elements cut by curved
boundaries, isoparametric techniques \cite{Lehr17} and so called 
boundary value correction techniques have been proposed 
\cite{BHL18, BBCL18}. For both these cases
optimal order a priori error estimates for arbitrary order of the
polynomial approximation have been derived. For an analysis of the
fitted finite element method on domains with curved boundaries we
refer to \cite{BK94}.

The purpose of this paper is to design an a posteriori error estimation
for the cut finite element method for problems with non-polygonal
boundary. We will base our discussion on the cut finite element
methods for the Poisson problem introduced by Burman and Hansbo
\cite{BH12}. This method uses Nitsche's method \cite{Nit71} to impose
Dirichlet boundary conditions and a ghost penalty term \cite{Bu10} to 
enhance stability in the boundary zone. We restrict the discussion to 
the fictitious domain problem and piecewise affine approximation. 
The main motivation for studying the a posteriori  error estimation is  for the application of adaptive mesh refinement (AMR) procedure. It is well known that AMR is extremely useful for problems with singularities, discontinuities, sharp derivatives, etc. and it has been extensively studied in the last two decades, \cite{verfurth1994posteriori, ainsworth2011posteriori}. However, there is very limited work in the literature that takes the geometry approximation into account. 
In \cite{dorfler1998adaptive} the a posteriori error estimation is
studied for the conforming finite element method on curved boundary
where the boundary vertices of the approximated mesh must be located
on the true boundary and it is assumed that data can be requested at
any point on the boundary and inside the domain, i.e., there is only
approximation of the geometry. In \cite{ainsworth2017computable} a
fully computable error bound is provided for the conforming linear
elements with pure Neumann data. To the author's knowledge there is no
existing work on the a posteriori error estimation for cut finite
element methods on problems with curved boundary, to allow for. Another approach for
the handling of geometric singularities in the cutFEM framework was
recently proposed in \cite{JLL18}.

In our error analysis we do not assume that the vertices of the
approximation boundary are located on the boundary of the continuous
problem, while we do require that the discrete boundary is a
sufficiently good
approximation of the boundary of the continuous problem. We do not
require that data inside the domain can be everywhere requested but
only those originally provided in the continuous setting. For the
formulation it is necessary to extend the source term data from the
domain of the continuous problem to the computational domain, the
extension must satisfy certain stability properties.
 The error estimator comprises two parts. One part is due to the
 numerical approximation; and the other part is exclusively due to the
 boundary approximation. We refer to this latter contribution as the
 boundary correction error. This part must be computed using a locally
 improved boundary approximation in the boundary zone. The computation
 of this correction is discussed in the numerical section.

Unlike the methods whose mesh is an exact
 partition of the computational domain, one major challenge for cut
 finite element is that the approximated domain cuts the background
 mesh in an arbitrary way. Because of this the classical efficiency
 analysis, i.e., by applying the
 local elementary bubble functions, is not robust. Therefore we divide the efficiency
 part of the estimates in two parts. The residuals in the bulk, away
 from the boundary, are estimated in the ordinary fashion. For the
 residuals in the cut elements and the nonconforming ghost penalty
 operator, we instead prove a global lower bound of best approximation type,
 showing that the boundary residuals are bounded by the best
 approximation of $u$ in the physical domain and oscillations.
 
 
This paper is organized as follows. In \cref{sec:2} the model problem and the cut finite element method is introduced. The error estimator is introduced in \cref{sec:3} and as well its global reliability. The local efficiency is proved in  \cref{sec:4}. Finally, we show the  results of several numerical experiment in \cref{sec:5}.

\section{Model Problem and the Cut Finite Element Method}\label{sec:2}
\subsection{The Continuous Problem}
Let $\Omega$ be a domain in $\mathbb{R}^d$ with Lipschitz continuous,
piecewise smooth boundary $\partial \Omega$ and exterior unit normal $\bfn$. 

We consider the problem: find $u:\Omega \rightarrow \IR$ such that
\begin{alignat}{2}\label{eq:poissoninterior}
-\Delta u &= f \qquad 
&& \text{in $\Omega$}
\\ \label{eq:poissonbc}
u &= g \qquad && \text{on $\partial\Omega$}
\end{alignat}
where $f\in L^2(\Omega)$ and $g\in H^{1/2}(\partial \Omega)$ 
are given data. It follows from the Lax-Milgram lemma that there 
exists a unique solution $u \in H^1(\Omega)$ to this problem. We also 
have the  following elliptic regularity estimate
\begin{equation}\label{eq:ellipticregularity}
\|u\|_{H^{s+2}(\Omega)} \lesssim \|f\|_{H^s(\Omega)}, \qquad 
-1 \leq s \leq s_0
\end{equation} 
for some $s_0\geq 1/2$ depending on the domain. Here and below we use the notation 
$\lesssim$ to denote less or equal up to a generic constant that is independent of the
mesh-geometry configuration.

\subsection{The Mesh, Discrete Domains, and Finite Element Spaces}
Assume that
$\partial \Omega$ \textcolor{black}{is composed of a finite number of
  smooth surfaces $\Gamma_i$, such that $\partial \Omega = \cup_i \bar
  \Gamma_i$}. 
We let $\rho$ 
be the signed distance function, negative on the inside and 
positive on the outside, to $\partial \Omega$ and we let 
$U_\delta(\partial \Omega)$, for $\delta>0$,
be the tubular neighborhood $\{\bfx \in \IR^d : |\rho(\bfx)| < \delta\}$ 
of $\partial \Omega$.  Let $\Omega_0 \subset \IR^d$ be a polygonal domain such 
that $U_{\delta_0}(\Omega) \subset \Omega_0$ 
where $\delta_0$ is chosen such that $\rho$ is well defined in $U_{\delta_0}(\Omega)$. 
Let $\cT_{0,h}$ 
be a partition of $\Omega_0$ into shape 
regular triangles or tetrahedra. Note that this setting allows meshes with locally  
dense refinement.

Given a subset $\omega$ of $\Omega_0$, let 
$\cT_h(\omega)$ be the submesh defined by
\begin{equation}
\cT_{h}(\omega) = \{K \in \cT_{0,h} : \overline{K} \cap 
\overline{\omega}
\neq \emptyset \}
\end{equation}
i.e., the submesh consisting of elements that intersect 
$\overline{\omega}$, and let 
\begin{equation}
\triangle_{h}(\omega)= \bigcup\limits_{K \in \cT_{h}(\omega)}K 
\end{equation}
be the union of all elements in $\cT_h(\omega)$. 


For each $\cT_{0,h}$ let $\Omega_h$ be a polygonal domain approximating $\Omega$ and we assume that
$\partial \Omega_h \subset U_{\delta_0}(\partial \Omega)$ which implies that $\partial \Omega_h$ is within the distance of $\delta_0$ to $\partial \Omega$. We assume neither $\Omega_h \subset \Omega$ nor  $\Omega \subset
\Omega_h$, instead the maximum distance between the two domains has to
be small enough. 

Let the active mesh be defined by
\begin{equation}
\cT_{h} := \cT_h( \O \cup \Omega_h) 
\end{equation}
i.e., the submesh consisting of elements that intersect 
$\Omega_h\cup\Omega$, and let 
\begin{equation}
\triangle_h := \triangle_h(\Omega\cup \Omega_h)
\end{equation}
be the union of all elements in $\cT_h$. Since $\partial \O_h$ does not necessarily fit 
the mesh we denote by $\cT_h^b$ the set of elements that are ``cut" by $\partial \O_h$,
\[
\cT_h^b = \{ K \in \cT_{0,h}: K \cap \partial \O_h \neq \emptyset\} \subset \cT_h.
\]
\textcolor{black}{We assume that $\Omega_h$ is constructed in such a 
way that for each $K  \in \cT_{h}^b$, the intersection $K\cap \partial \Omega_h$ is a subset of 
a $d-1$ dimensional hyperplane, i.e., a line segment in two dimensions and a subset of a plane 
in three dimensions.} \textcolor{black}{Under the smallness assumption of $h$, for any element
  $K \in \cT_h^b$ there
exists an element $K' \in \cT_h( \Omega_h)\setminus \cT_h^b$ such that
$\mbox{dist}(K,K') = O(h)$ where $O(\cdot)$ denotes the big ordo. This is always possible for small enough $h$
since $\Omega$ is Lipschitz.}  

Also we denote by $\cE$ the set of all facets on $\cT_h$ and by $\cE_I \subset \cE$ the set of all interior facets with respect to $\cT_h$.
For each $F \in \cE$ denote by $\bfn_F$ an unit vector normal to $F$ and by $h_F$ the diameter of $F$.
For each $K \in \cT_h$ denote by $h_K$ the diameter of $K$ and by $\cE_K$ the set of all facets of $K$.

On the boundary $\partial \O_h$ let $\bfn_h$ be the outer normal to $\partial \O_h$.
For each $\Omega_h$ we assume that, for $\delta_0$ small enough, there exist functions 
$\bfnu_h:\partial \Omega_h \rightarrow \mathbb{R}^d$, $|\bfnu_h|=1$, and $\varrho_h:\partial \Omega_h \rightarrow \mathbb{R}$, on $U_{\delta_0}(\partial \Omega)$, such that the function 
$\bfp_h(\bfx,\varsigma):=\bfx + \varsigma \bfnu_h(\bfx)$, is well
defined and satisfies
$\bfp_h(\bfx,\varrho_h(\bfx)) \in \partial \Omega$ for all $\bfx
\in \partial \Omega_h$. 
The existence of the vector-valued function $\bfnu_h$ is known to hold
on Lipschitz domains, see Grisvard \cite{Gris11}.

We further assume that 
$\bfp_h(\bfx,\varsigma)
\in U_{\delta_0}(\Omega)$ for all $\bfx \in \partial \Omega_h$ and all
$\varsigma$ between $0$ and $\varrho_h(\bfx)$.
For conciseness we will drop the second argument, $\varsigma$, of $\bfp_h$ below whenever it takes the value $\varrho_h(\bfx)$ and let $\bfp_h$ denotes the map $\bfp_h:\partial \Omega_h \rightarrow \partial \Omega$.
Moreover, we assume that the following assumption is satisfied
\begin{equation}\label{eq:geomassum-a}
\| \varrho_h \|_{L^\infty(\partial \Omega_h \cap K)} \le  O(h_K) \quad
\forall \, K \in \cT_h^b.
\end{equation}
The above assumption immediately implies that $\partial \O_h$ is within the distance of $O(h)$ of $\partial \O$. More precisely, 
\begin{equation}
\| \varrho \|_{L^\infty(\partial \Omega_h \cap K)} = O(h_K) \quad
\forall \, K \in \cT_h^b.
\end{equation}
This assumption is necessary for the constant in the a posteriori
error estimates to be independent of the geometry/mesh
configuration. It is however not enough to guarantee optimal a priori
error estimates, which requires $\| \varrho \|_{L^\infty(\partial \Omega_h \cap K)} \le O(h_K^2)$, 
and $\| \bfn - \bfn_h \|_{L^\infty(\partial \Omega_h \cap K)} \le O(h_K)$, 
see \cite{BHL18}.


\subsection{The Cut Finite Element Method} 
In this subsection we recall the CutFEM introduced in \cite{BH12}. We begin with some 
necessary notation. Let $\mcF_h$ be the set of faces intersecting the approximate 
boundary $\O_h$:
\[ 
\mcF_{h} = \{ F \in \cE_I \,:\, (K_F \cup K_F') \cap \partial \O_h \neq \emptyset\}
\]
where $K_F$ and $K'_F$ are those two elements sharing $F$ as a common facet, and 
for any discontinuous function $v$ define its jump on the facet $F$ by
\[
\jump{v}|_F := v^+_F - v^-_F \quad \mbox{and} \quad 
v_F^{\pm}(x)
= \lim \limits_{s \rightarrow 0^+} 
v(\bfx \mp s \bfn_F).
\]
Next define the finite element space
\begin{equation}
V_{h} :=\{ v \in H^1(\triangle_h): v|_K \in \mathbb{P}_1(K) \quad \forall \, K \in \cT_{h} \}
\end{equation}
and the forms
\begin{equation}\label{forms}
\begin{split}
a_0(v,w) &:= (\nabla v,\nabla w)_{\Omega_h} 
- \left<D_{\bfn_h} v,w\right>_{\partial \Omega_h} 
- \left<v, D_{\bfn_h} w\right>_{\partial \Omega_h} 
+   \sum_{K \in \cT_h^b} \dfrac{\beta}{h_K}\left< v,w\right>_{\Gamma_K},\\
j_h(v,w) &:= \gamma\sum_{F \in \mcF_{h}}  h_F \left< \jump{D_{\bfn_F} v},\jump{D_{\bfn_F} w}\right>_F,\\
a_h(v,w)& := a_0(v,w) + j_h(v,w), 
\\
l_h(w) &:= (f,w)_{\Omega_h} 
- \left<g_h , D_{\bfn_h} w\right>_{\partial \Omega_h} 
+ \sum_{K \in \cT_h^b} \dfrac{\beta}{h_K}\left<g_h,w\right>_{\Gamma_K},
\end{split}
\end{equation}
where $D_{\bfn_h} v := \bfn_h \cdot \nabla v$,  $\gamma$ \textcolor{black}{and $\beta$} are positive constants, $g_h$ is an approximation of $g$ defined on $\partial \Omega_h$,
typically $g_h(\bfx) = g \circ \bfp_h$, and
$\Gamma_K = K \cap \partial \O_h$ \textcolor{black}{and $f\vert_{\Omega_h \setminus
  \Omega}$ is defined by some suitable extension (in the low order
case considered here it can be taken to
be zero).} 
\begin{remark}
The stabilizing term $j_{h}(v,w)$, which is the so called ghost penalty term, is introduced to extend the coercivity of $a_h(\cdot,\cdot)$ to all of $\triangle_h$, see  \cite{Bu10,MasLarLog14}. Thanks to this property one may prove that the condition
  number of the linear system is uniformly bounded independent of how $\Omega_h$ is
  oriented compared to the mesh.
\end{remark}

\begin{remark}
In order to guarantee 
    the coercivity of \cref{forms} $\beta$ has to be chosen large enough.
\end{remark}


The finite element method is then to find  $u_h \in V_h$ such that 
\begin{equation}\label{eq:fem}
a_h(u_h, v) = l_h(v) \quad \forall \,v \in V_h
\end{equation}
where $a_h$ and $l_h$ are defined in \cref{forms}. 

For $v \in H^1(\O_0)$ define the continuous and discrete energy norm
respectively by
\begin{align}\label{def:ene_norm}
\tn v \tn^2 &:= 
\|\nabla v\|^2_{\Omega} 
+\|D_{\bfn}v \|^2_{H^{-1/2}(\partial \Omega)} 
+ \|h^{-\frac12} v\|^2_{\partial \Omega}
\end{align}
and
\begin{align}\label{def:ene_norm_h}
\tn v \tn_h^2 &:= 
\|\nabla v\|^2_{\Omega_h} 
+\sum_{K \in \cT_h^b} h_K \|D_{\bfn_h} v \|^2_{\Gamma_K} 
+ \sum_{K \in \cT_h^b}h_K^{-1}\|v\|^2_{\Gamma_K}.
\end{align}

\subsection{Some Important Inequalities}
We have the following well known inverse and trace inequalities \cite[Section 1.4.3]{DiPE12},
\begin{equation}\label{eq:standard_trace}
\|v\|_{\partial K} \le C( h_K^{-1/2}\|v\|_{K} + h_K^{1/2}\|\nabla
v\|_K) \quad \forall \, v \in H^1(K),
\end{equation}
\begin{equation}\label{eq:discrete_trace}
h_K^{-\frac12} \|v_h\|_{\partial K} + \|\nabla v_h\|_K \le C h_K^{-1}\|v_h\|_{K} 
\quad \forall \, v_h \in \mathbb{P}_1(K)
\end{equation}
and \cite{HH02}
\begin{equation}\label{eq:boundary_trace}
	\|v_h\|_{\Gamma_K} \le C( h_K^{-1/2}\|v_h\|_{K} +
        h_K^{1/2}\|\nabla v_h\|_K) \quad \forall \, v_h \in \mathbb{P}_1(K)
\end{equation}
where  the constant $C$ is independent of the relative location of $\O_h$.

\begin{lem}
Let $v \in H_0^1(\O)$. Then for any $K$ such that $K \in \cT_h^b$ or $K \cap (\O \setminus \O_h) \neq \emptyset$ there exists a local convex neighborhood $\mathcal{S}_K$ of $K$ such that $v$ vanishes 
on a nonzero subset of $\partial S_K$ and
\begin{equation}\label{poincare}
	\|v\|_{K} \lesssim h_K\| \nabla v\|_{\mathcal{S}_K},
\end{equation}
where we defined $v$ outside $\O$ using the trivial extension $v\vert_{\O^c} = 0$.
\end{lem}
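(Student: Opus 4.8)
The plan is to reduce \eqref{poincare} to a scaled Poincar\'e--Friedrichs inequality on a convex set that reaches from $K$ into the exterior of $\O$, where the zero--extended function must vanish. First I would record the effect of the trivial extension: since $v \in H_0^1(\O)$, its extension by zero (still denoted $v$) belongs to $H^1(\IR^d)$ with $\nabla v = 0$ a.e.\ on $\O^c$. Consequently $\|\nabla v\|_{\mathcal S_K} = \|\nabla v\|_{\mathcal S_K \cap \O}$ and $\|v\|_K = \|v\|_{K\cap \O}$, so both sides of \eqref{poincare} only involve the physical part of $v$, and the extended $v$ is a genuine $H^1$ function on any $\mathcal S_K$ we build.

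Second, I would show that in either case $K$ lies within distance $O(h_K)$ of $\partial\O$. For $K \in \cT_h^b$ this is immediate from \eqref{eq:geomassum-a}, which places $\partial\O_h \cap K$ within $O(h_K)$ of $\partial\O$. For $K$ with $K \cap (\O \setminus \O_h) \neq \emptyset$, the geometric assumptions confine $\O \setminus \O_h$ to an $O(h_K)$--tube about $\partial\O$ near $K$: a point of $\O$ lying deeper inside (with $\rho < -C h_K$) would also belong to $\O_h$, since $\partial\O_h$ stays $O(h_K)$--close to $\partial\O$, so the difference set cannot escape the boundary zone. Hence in both cases there is a boundary point $\bfx_0 \in \partial\O$ with $\mathrm{dist}(\bfx_0,K) \lesssim h_K$.

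Third comes the construction of $\mathcal S_K$. Because $\O$ is Lipschitz it satisfies a uniform exterior cone condition (cf.\ Grisvard \cite{Gris11}): there are an aperture and a height depending only on $\O$, yielding at $\bfx_0$ an exterior cone contained in $\O^c$. Under the smallness assumption on $h$ I inscribe in this cone a ball $B = B(\bfz,r)$ with $r \sim h_K$, $\overline B \subset \O^c$ and $\mathrm{dist}(B,K) \lesssim h_K$, and set $\mathcal S_K := \mathrm{conv}(K \cup B)$. This set is convex, contains $K$, has diameter $\lesssim h_K$, and the outer spherical cap of $\partial B$ lying on $\partial \mathcal S_K$ is a subset of $\O^c$ of positive surface measure on which $v$ vanishes. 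Applying the Poincar\'e--Friedrichs inequality on $\mathcal S_K$ to functions with vanishing trace on that cap and using the diameter bound gives $\|v\|_K \le \|v\|_{\mathcal S_K} \lesssim h_K\,\|\nabla v\|_{\mathcal S_K}$, which is \eqref{poincare}.

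The main obstacle is robustness of the constant with respect to the cut. The Poincar\'e--Friedrichs constant a priori depends on the shape of $\mathcal S_K$ and on the relative measure of the vanishing cap, and these must be controlled uniformly over all cut elements and all orientations of $\partial\O$ relative to the mesh. The crucial point is that $\mathcal S_K$ is built from the \emph{entire} shape-regular element $K$ together with a ball of comparable radius at comparable distance, dictated solely by the fixed cone condition of $\O$ rather than by the possibly sliver-like intersection $K \cap \O$. After rescaling to unit diameter the pairs $(\mathcal S_K,\text{cap})$ thus form a family of uniformly bounded geometry with caps of relative measure bounded below, for which a single Poincar\'e--Friedrichs constant is extracted by the usual compactness and scaling argument. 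This uniformity is exactly what renders \eqref{poincare} independent of the mesh--geometry configuration.
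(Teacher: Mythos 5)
Your proof is correct and follows essentially the same route as the paper: in all cases one locates a boundary point of $\O$ within $O(h_K)$ of $K$, builds a convex set of diameter $\sim h_K$ containing $K$ that reaches into $\O^c$ (the paper simply takes a ball $B_\delta(x)$ centered at that boundary point, with the Lipschitz/exterior-cone property implicitly providing the vanishing set, whereas you take the convex hull of $K$ with an explicitly inscribed exterior ball), and concludes by the scaled Poincar\'e--Friedrichs inequality applied to the zero extension. Your treatment is somewhat more explicit than the paper's about why the constant is uniform and why $K\cap(\O\setminus\O_h)\neq\emptyset$ forces $K$ to lie near $\partial\O$, but the underlying argument is the same.
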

\begin{proof}
If $K \subset \O^c$, the estimate \cref{poincare} is obvious since $v$ is taken uniformly zero 
outside $\O$. If  $K \cap \partial \O \neq \emptyset$, then we let $x\in K \cap \partial \O$ and 
let $S_K = B_\delta(x)$ be the open ball centered at $x$ with radius $\delta \sim h$. The estimate 
(\ref{poincare}) is a direct consequence of the Poincar\'e inequality on $S_K$.
Otherwise $K\subset \O$. From \cref{eq:geomassum-a} it follows that 
\[
\mbox{dist}(K, \partial \O) \leq O(h)
\]
and thus there is $x \in \partial \Omega$ and $\delta \sim h$ such that $K \subset B_\delta(x)$ and  now \cref{poincare} follows with $S_K = B_\delta(x)$ from the Poincar\'e inequality on $S_K$. This completes the proof of the lemma.
\end{proof}

\section{Global Reliability}\label{sec:3}
Let $e := u - u_h$ and $\tilde e$ be any function such that
\[
	\tilde e \in H^1(\O) \quad \mbox{and} \quad \tilde e|_{\partial \O} = (g-u_h)
\] 
where $u_h$ is the solution to \cref{eq:fem}.

For each element $K \in \cT_h$, define the local element error indicator $\eta_K$ by
\begin{equation} \label{indicator}
	\eta_K^2 =   h_K^2\| f \|_{K \cap \O_h}^2
	+
	 \sum_{F \in \cE_K \cap \cE_I}\dfrac{h_F}{2} \|\jump{\DnF u_h}\|_{F}^2
	+
	h_K^{-1} \|g_h-u_h\|_{L^2(\Gamma_K)}^2 
	+  \| \nabla \tilde e\|_{K \cap \O}^2 .
\end{equation}
and let the global error estimator be defined by
\begin{equation} \label{estimator}
\eta = \left( \sum_{K \in \cT_h} \eta_K^2   \right)^{1/2}.
\end{equation}

\begin{thm}
Let $\tilde e \in H^1(\O)$ such that $\tilde e|_{\partial \O} = (g - u_h)|_{\partial \O}$.
We have the following reliability bound
\begin{equation}\label{reliability-result}
\| \nabla e\|_{\O} \le C_r
\left(  \sum_{K \in \cT_h} \eta_K^2
+  \sum_{K \in \cT_h}h_K^2 \|f\|_{(\O \setminus \O_h) \cap K}^2 \right)^{1/2}
\end{equation}
where the constant $C_r$ does not depend on the location of domain-mesh intersection nor the mesh size.
\end{thm}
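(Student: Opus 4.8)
The plan is to convert the bound into a homogeneous-Dirichlet residual estimate by exploiting the boundary correction $\tilde e$, and then to run a residual argument in which the delicate point is the robust treatment of the boundary layer. First I would set $w := e - \tilde e$. Since $u|_{\partial\O} = g$ and $\tilde e|_{\partial\O} = (g-u_h)|_{\partial\O}$ by hypothesis, the trace of $w$ on $\partial\O$ vanishes, so $w \in H^1_0(\O)$, and extended by zero outside $\O$ it lies in $H^1(\triangle_h)$. By the triangle inequality $\|\nabla e\|_\O \le \|\nabla w\|_\O + \|\nabla\tilde e\|_\O$, and $\|\nabla\tilde e\|_\O^2 = \sum_K\|\nabla\tilde e\|_{K\cap\O}^2$ is already a term of \eqref{indicator}; it remains to bound $\|\nabla w\|_\O$.

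Next I would test the continuous equation with $w$: since $w\in H_0^1(\O)$ and $-\Delta u = f$,
\[
\|\nabla w\|_\O^2 = (f,w)_\O - (\nabla u_h,\nabla w)_\O - (\nabla\tilde e,\nabla w)_\O,
\]
the last term being absorbed by Cauchy--Schwarz into $\|\nabla\tilde e\|_\O\|\nabla w\|_\O$. For the residual $R(w) := (f,w)_\O - (\nabla u_h,\nabla w)_\O$ I would subtract the discrete orthogonality $l_h(\pi_h w) - a_h(u_h,\pi_h w) = 0$ from \eqref{eq:fem}, where $\pi_h w\in V_h$ is a Scott--Zhang/Clément quasi-interpolant of the zero extension of $w$, with the usual local bounds $\|w-\pi_h w\|_K\lesssim h_K\|\nabla w\|_{\triangle_h(K)}$ and $\|w-\pi_h w\|_F\lesssim h_F^{1/2}\|\nabla w\|_{\triangle_h(K)}$. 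The decisive structural feature is that, because $w$ vanishes on $\partial\O$, the flux on the true boundary disappears under integration by parts, which is exactly what the correction $\tilde e$ buys us.

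Expanding $R(w) - (l_h(\pi_h w) - a_h(u_h,\pi_h w))$, converting the $\O$-integrals into $\O_h$-integrals, expanding $a_0$ and $l_h$ from \eqref{forms}, and integrating by parts element-wise with $\Delta u_h|_K = 0$, produces five families of terms. These are: the bulk residual $(f,w-\pi_h w)_{K\cap\O_h}$, controlled by $h_K\|f\|_{K\cap\O_h}$; the facet jumps $\langle\jump{\DnF u_h}, w-\pi_h w\rangle_{F\cap\O_h}$, controlled by $h_F^{1/2}\|\jump{\DnF u_h}\|_F$; the Nitsche consistency terms on $\Gamma_K$ carrying the residual $g_h-u_h$, bounded with the cut-robust trace inequality \eqref{eq:boundary_trace} by $h_K^{-1/2}\|g_h-u_h\|_{\Gamma_K}$; the ghost penalty $j_h(u_h,\pi_h w)\le j_h(u_h,u_h)^{1/2}j_h(\pi_h w,\pi_h w)^{1/2}$, absorbed since $j_h(u_h,u_h)$ is part of the facet-jump estimator and $j_h(\pi_h w,\pi_h w)^{1/2}\lesssim\|\nabla w\|_\O$; and the sliver contributions over $\O\setminus\O_h$ and $\O_h\setminus\O$ together with the normal-flux terms on $\partial\O_h$. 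Each of the first four matches a term of $\eta_K$; the single sliver integral $(f,w)_{\O\setminus\O_h}$ that matches no $\eta_K$ is estimated with the Poincar\'e Lemma \eqref{poincare}, giving $\lesssim(\sum_K h_K^2\|f\|_{(\O\setminus\O_h)\cap K}^2)^{1/2}\|\nabla w\|_\O$, precisely the extra contribution in \eqref{reliability-result}.

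The step I expect to be the main obstacle is the boundary layer, specifically the normal-flux terms $\langle D_{\bfn_h}u_h, w\rangle_{\partial\O_h}$. These are of order one and cannot be bounded termwise, since under the weak geometric assumption \eqref{eq:geomassum-a} the discrete normal $\bfn_h$ need not even be close to $\bfn$. The resolution is to integrate the sliver integrals $(\nabla u_h,\nabla w)_{\O\setminus\O_h}$ (and the analogous $\O_h\setminus\O$ term) by parts and to show that the flux they generate on $\partial\O_h$ cancels these order-one terms, leaving only interior-facet jumps and bulk integrals supported in the boundary zone. All of these surviving contributions are then controlled robustly by the Poincar\'e Lemma \eqref{poincare}---the vanishing of $w$ on $\partial\O$ supplying the crucial extra factor $h_K$ that converts $\|w\|$ on cut elements and facets into $h_K\|\nabla w\|$---together with the cut-insensitive trace inequality \eqref{eq:boundary_trace}; assumption \eqref{eq:geomassum-a} ensures the slivers have width $O(h)$ so that the Poincar\'e patches $\mathcal S_K$ have radius $\sim h_K$, and the ghost penalty keeps all interpolation and trace constants independent of the cut. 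Collecting the estimates, summing with Cauchy--Schwarz over elements, and dividing by $\|\nabla w\|_\O$ yields \eqref{reliability-result}.
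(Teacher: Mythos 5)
Your proposal is correct and follows essentially the same route as the paper: split off $\|\nabla\tilde e\|_\O$ via the triangle inequality using $e-\tilde e\in H_0^1(\O)$, subtract the Galerkin orthogonality with a Scott--Zhang interpolant, integrate by parts so that the $O(1)$ fluxes on $\partial\O_h$ cancel against those generated by the sliver integrals, and control the remaining boundary-zone terms with the cut-robust trace inequality and the Poincar\'e lemma \eqref{poincare}. The only (cosmetic) difference is that you test directly with $w=e-\tilde e$ whereas the paper dualizes over all $v\in H_0^1(\O)$ before invoking the error representation of Lemma \ref{lem:error-rep} and the bounds of Lemma \ref{lem:reliability-bound}.
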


\begin{proof}
Note that $e - \tilde e \in H_0^1(\O)$. By the triangle inequality we have that
\begin{equation}
\begin{split}
	\| \nabla e\|_{\O} &\le \| \nabla (e - \tilde e)\|_{\O}+ \| \nabla \tilde e\|_{\O} 
	\\
	 &=
	\sup_{ v \in H_0^1(\O)} \dfrac{(\nabla ( e - \tilde e), \nabla v)_{\O}}{\| \nabla v\|_{\O}}
	+ 
	 \| \nabla \tilde e\|_{\O} \\[2mm]
	 &\le
	 	\sup_{ v \in H_0^1(\O)} \dfrac{(\nabla e , \nabla v)_{\O}}{\| \nabla v\|_{\O}}
	+
	2 \| \nabla \tilde e\|_{\O}.
\end{split}
\end{equation}
It then suffices to show that
	\[
		\sup_{ v \in H_0^1(\O)} \dfrac{(\nabla e , \nabla v)_{\O}}{\| \nabla v\|_{\O}} 
		\lesssim
		\left(  \sum_{K \in \cT_h} \eta_K^2 
		+  \sum_{K \in \cT_h}h_K^2 \|f\|_{(\O \setminus \O_h) \cap K}^2 \right)^{1/2},
	\]
which is a direct result from \cref{{lem:error-rep}} and \cref{lem:reliability-bound} below. This completes the proof of the theorem.
\end{proof}

\begin{rem}
	We omit the second term in \cref{reliability-result} in the
        algorithm computation in order to avoid integration on the curved domain. What's more, one can easily make it a higher order term by satisfying
	\[
	\|\varrho\|_{L^{\infty}(\Gamma_K)} \le o(h_K) \quad \forall \, K \in \cT_h^b.
	\]
\end{rem}

\begin{lem}\label{lem:error-rep}
For any  $v \in H_0^1(\O)$ and $v_h \in V_h$
the following equality holds:
\begin{equation} \label{error-rep}
	(\nabla e, \nabla v)_\O = \mathcal{A}_1 + \mathcal{A}_2
\end{equation}
where
\begin{equation}
	 \mathcal{A}_1 = 
	 (f , v - v_h)_{\O_h}
	+(f , v)_{\O \setminus \O_h} 
\end{equation}
and
\begin{align}
	\mathcal{A}_2
	&=- \dfrac{1}{2}\sum_{K \in \cT_h} \sum_{F \in \cE_K}
		  \int_{F \cap {\O_h}} \jump{\DnF u_h} (v-v_h) \,ds
		  + \left< g_h-u_h,D_{\bfn_h} v_h\right>_{\partial \Omega_h} 
		 \\
	&\qquad  \nonumber
	    -\dfrac{1}{2}\sum_{K \in \cT_h} \sum_{F \in \cE_K}
		  \int_{F \cap {(\O \setminus \O_h)}} \jump{\DnF u_h} v \,ds
		  \\
	&\qquad  \nonumber	  
	- \sum_{K \in \cT_h^b} \dfrac{\beta}{h_K} \left<g_h -u_h,v_h\right>_{\Gamma_K}
	+j_h(u_h,v_h) .
\end{align}
\end{lem}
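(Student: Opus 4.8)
The plan is to obtain the identity by combining the continuous weak formulation, the discrete equation \cref{eq:fem}, and element-wise integration by parts, and then matching the resulting terms against $\mathcal{A}_1+\mathcal{A}_2$. First I would use that $u$ solves \cref{eq:poissoninterior}--\cref{eq:poissonbc}: testing the PDE against $v\in H_0^1(\O)$ makes the $\partial\O$ flux drop (as $v|_{\partial\O}=0$), so $(\nabla u,\nabla v)_\O=(f,v)_\O$ and hence
\[
(\nabla e,\nabla v)_\O = (f,v)_\O - (\nabla u_h,\nabla v)_\O .
\]
Next I would simplify $\mathcal{A}_1$. Since $v$ is extended by zero outside $\O$, the mass integral over $\O_h\setminus\O$ vanishes, giving $(f,v)_{\O_h}=(f,v)_{\O\cap\O_h}$ and therefore $\mathcal{A}_1=(f,v)_\O-(f,v_h)_{\O_h}$. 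Subtracting this from the previous display reduces the whole lemma to the single identity $\mathcal{A}_2 = (f,v_h)_{\O_h} - (\nabla u_h,\nabla v)_\O$.

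To treat $(f,v_h)_{\O_h}$ I would invoke $a_h(u_h,v_h)=l_h(v_h)$ and solve for $(f,v_h)_{\O_h}$; collecting the Nitsche contributions produces exactly the three boundary-zone terms of $\mathcal{A}_2$, namely $\left<g_h-u_h,D_{\bfn_h}v_h\right>_{\partial\O_h}$, $-\sum_{K\in\cT_h^b}\tfrac{\beta}{h_K}\left<g_h-u_h,v_h\right>_{\Gamma_K}$ and $j_h(u_h,v_h)$, together with the two leftover terms $(\nabla u_h,\nabla v_h)_{\O_h}$ and $-\left<D_{\bfn_h}u_h,v_h\right>_{\partial\O_h}$. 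For the latter pair I would integrate by parts element-wise over $\O_h$: because $u_h|_K\in\mathbb{P}_1(K)$ we have $\Delta u_h=0$ on each $K$, so only facet and $\partial\O_h$ contributions remain, the $\partial\O_h$ piece being precisely $\left<D_{\bfn_h}u_h,v_h\right>_{\partial\O_h}$ (which cancels) and the interior facets yielding $\tfrac12\sum_{K\in\cT_h}\sum_{F\in\cE_K}\int_{F\cap\O_h}\jump{\DnF u_h}\,v_h\,ds$. In parallel I would integrate $(\nabla u_h,\nabla v)_\O$ by parts element-wise; here $v|_{\partial\O}=0$ kills the curved-boundary flux and I obtain $\tfrac12\sum_{K\in\cT_h}\sum_{F\in\cE_K}\int_{F\cap\O}\jump{\DnF u_h}\,v\,ds$.

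Finally I would reconcile the two facet sums. Splitting $F\cap\O=(F\cap\O\cap\O_h)\cup(F\cap(\O\setminus\O_h))$ reproduces the $(\O\setminus\O_h)$ jump term of $\mathcal{A}_2$ verbatim, while on the remaining piece I may freely replace $F\cap\O\cap\O_h$ by $F\cap\O_h$ since $v=0$ on $\O_h\setminus\O$; combining with the $v_h$ facet sum then assembles $\int_{F\cap\O_h}\jump{\DnF u_h}(v-v_h)\,ds$, which is the first term of $\mathcal{A}_2$. I would also record the standard observation that any facet with $F\cap\O_h\neq\emptyset$ (resp. $F\cap\O\neq\emptyset$) has both neighbouring elements in $\cT_h$, so it is a genuine interior facet and the factor $\tfrac12$ in the double sum correctly recovers the sum over $\cE_I$, with no boundary facets of the active mesh contributing. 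The one step requiring genuine care is exactly this bookkeeping of the $\O$ versus $\O_h$ mismatch, handled throughout by the trivial extension $v|_{\O^c}=0$, together with pinning down the orientation in the definition of $\jump{\cdot}$; once the sign is fixed so that $(\nabla u_h,\nabla v)_\O=+\tfrac12\sum_{K\in\cT_h}\sum_{F\in\cE_K}\int_{F\cap\O}\jump{\DnF u_h}\,v\,ds$, both integration-by-parts identities carry the same sign and every term lands precisely as displayed in $\mathcal{A}_2$, completing the proof.
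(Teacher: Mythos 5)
Your proposal is correct and follows essentially the same route as the paper: the continuous weak form, the discrete equation \cref{eq:fem}, element-wise integration by parts with $\Delta u_h=0$, and the zero extension of $v$ to handle the $\O$ versus $\O_h$ mismatch. The only (immaterial) difference is bookkeeping: the paper integrates by parts the combination $-(\nabla u_h,\nabla(v-v_h))_{\O_h}-(\nabla u_h,\nabla v)_{\O\setminus\O_h}$ in one step, whereas you treat $(\nabla u_h,\nabla v_h)_{\O_h}$ and $(\nabla u_h,\nabla v)_{\O}$ separately and recombine, arriving at the same cancellation of the $\left<D_{\bfn_h}u_h,v_h\right>_{\partial\O_h}$ flux and the same facet sums.
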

\begin{proof} Extending $v\in H^1_0(\Omega)$ to $\IR^d$ by setting $v=0$ outside of $\Omega$ and using integration by parts gives
\begin{equation} \label{error_rep_1}
\begin{split}
	 (\nabla e, \nabla v)_{\O} &=
	(f ,v)_{\O}
	- (\nabla u_h, \nabla v)_{\O_h}  
	- (\nabla u_h,  \nabla v)_{\O \setminus \O_h} 
	 \\[2mm]
	&=( f, v)_\O 
	-  (\nabla u_h , \nabla v_h )_{\O_h} 
	- (  \nabla u_h , \nabla (v -v_h) )_{\O_h} 
	- (\nabla u_h,  \nabla v)_{\O \setminus \O_h}.
\end{split}
\end{equation}
For the second term in \cref{error_rep_1}, by \cref{eq:fem} we have
\begin{equation} \label{error_rep_2}
\begin{split}
	-(\nabla u_h, \nabla v_h)_{\O_h} &=
	- \left<D_{\bfn_h} u_h,v_h\right>_{\partial \Omega_h} +
	 j_h(u_h,v_h)
	-(f,v_h)_{\Omega_h}  
	\\
	&\qquad + \left< g_h-u_h, D_{\bfn_h} v_h\right>_{\partial \Omega_h}
	- \sum_{K \in \cT_h^b} \dfrac{\beta}{h_K}\left<g_h -u_h,v_h\right>_{\Gamma_K}. 
\end{split}
\end{equation}
For the last two terms in (\ref{error_rep_1}) applying the integration by parts gives
\begin{equation}\label{error_rep_3}
\begin{split}
&- (\nabla u_h ,\nabla (v -v_h) )_{\O_h}  
	-(\nabla u_h , \nabla v )_{\O \setminus \O_h} \\
&\qquad =
	  - \sum_{K \in \cT_h} \int_{\partial (K \cap \O_h)} (D_{\bfn} u_h) (v - v_h) \,ds
	  -\sum_{K \in \cT_h} \int_{\partial (K \cap (\O \setminus \O_h))} (D_{\bfn} u_h) v \,ds
	\\
&\qquad =
	-\dfrac{1}{2}\sum_{K \in \cT_h} \sum_{F \in \cE_K}
	\int_{F \cap {\O_h}} \jump{\DnF u_h} (v-v_h) \,ds
	\\
	&\qquad \qquad 
	 -\dfrac{1}{2}\sum_{K \in \cT_h} \sum_{F \in \cE_K}
	\int_{F \cap {(\O \setminus \O_h)}} \jump{\DnF u_h} v \,ds
	 +\left< D_{\bfn_h} u_h, v_h \right>_{\partial \O_h}.
\end{split}
\end{equation}
In the last equality we used the following identity
\[
	-\int_{\partial \O_h} \nabla u_h \cdot \bfn v \,ds 
	-\int_{\partial (\O \setminus \O_h)} \nabla u_h \cdot \bfn v \,ds = 0
\]
thanks to the fact that $v = 0$ on $\partial \O$ and in $\O_h \setminus \O$, where $\bfn$ without subscript denotes the outer normal to the boundary of domain being integrated. The desired 
identity \cref{error-rep} is then a direct consequence of \cref{error_rep_1}--\cref{error_rep_3}.
This completes the proof of the lemma.
\end{proof}

\begin{remark}
Note that $\mathcal{A}_1$ and $\mathcal{A}_2$ in \cref{lem:error-rep} does not contain the inconsistency error caused by the boundary approximation. In other words, the boundary correction error has been independently isolated by the term $\|\nabla \tilde e\|$. Since any $\tilde e$ satisfying the boundary condition will yield an upper bound for the error, an inappropriate construction of $\tilde e$ will potentially over estimate the error.
\end{remark}
 
 \begin{lem} \label{lem:reliability-bound}
 Let $\mathcal{A}_1$ and $\mathcal{A}_2$ be given in \cref{lem:error-rep}. Then we have the following estimates:
 \begin{equation}\label{bound_for_A1}
 \mathcal{A}_1 \lesssim  \left( \sum_{K \in \cT_h} \left(h_K^2 \|f\|_{K \cap \O_h}^2  +
 h_K^2 \|f\|_{K \cap ( \O \setminus \O_h)}^2\right)
 \right)^{1/2}
 \|\nabla v\|_{\O},
 \end{equation}
 and
\begin{equation} \label{bounds-for-A2}
	\mathcal{A}_2 \lesssim
	\left( 
		\sum_{K \in \cT_h} \sum_{F \in \cE_K \cap \cE_I} 
		\dfrac{h_F}{2} \|\jump{\DnF u_h}\|_{F}^2 
		+\sum_{K \in \cT_h^b} h_K^{-1} \|g_h-u_h\|_{\Gamma_K}^2 \right)^{1/2} 
		 \|\nabla v\|_{\O}.
\end{equation}
 \end{lem}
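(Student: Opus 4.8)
The plan is to test the residual against a Cl\'ement-type quasi-interpolant. Extend $v \in H^1_0(\O)$ by zero outside $\O$, so that $v \in H^1(\IR^d)$ with $\|\nabla v\|_{\IR^d} = \|\nabla v\|_\O$, and let $v_h \in V_h$ be its quasi-interpolant on the (shape-regular) active mesh $\cT_h$. I will use the standard local estimates $\|v - v_h\|_K \lesssim h_K \|\nabla v\|_{\omega_K}$ and $\|v - v_h\|_F \lesssim h_F^{1/2}\|\nabla v\|_{\omega_F}$ over the finite-overlap patches $\omega_K, \omega_F$, together with the $H^1$-stability $\|\nabla v_h\|_{\triangle_h} \lesssim \|\nabla v\|_\O$. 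The guiding principle is a dichotomy: contributions tested against the interpolation remainder $v-v_h$ are handled by these standard estimates, whereas any contribution in the boundary zone tested against $v$ (or against $v_h$) directly must instead be controlled through the Poincar\'e inequality \cref{poincare}, which is exactly what keeps the constants independent of the cut configuration.

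For $\mathcal{A}_1$ I split into the bulk term $(f, v-v_h)_{\O_h}$ and the exterior term $(f,v)_{\O \setminus \O_h}$. The bulk term is estimated elementwise by Cauchy--Schwarz and $\|v-v_h\|_K \lesssim h_K\|\nabla v\|_{\omega_K}$, which after summation gives the factor $(\sum_K h_K^2\|f\|_{K\cap\O_h}^2)^{1/2}$. For the exterior term $v$ appears directly, so I invoke \cref{poincare}: for each $K$ meeting $\O\setminus\O_h$ one has $\|v\|_K \lesssim h_K\|\nabla v\|_{\mathcal{S}_K}$, and summation over the finitely overlapping neighbourhoods $\mathcal{S}_K$ yields the factor $(\sum_K h_K^2\|f\|_{K\cap(\O\setminus\O_h)}^2)^{1/2}$. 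Combining the two gives \cref{bound_for_A1}.

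For $\mathcal{A}_2$ I treat its five contributions in turn. The jump term over $F\cap\O_h$ tested against $v-v_h$ is bounded by the trace inequality \cref{eq:standard_trace} and interpolation, producing $h_F^{1/2}\|\jump{\DnF u_h}\|_F\,\|\nabla v\|_{\omega_F}$ per facet; the jump term over $F\cap(\O\setminus\O_h)$ tested against $v$ is treated by \cref{eq:standard_trace} combined with \cref{poincare}, giving $\|v\|_F \lesssim h_F^{1/2}\|\nabla v\|_{\mathcal{S}_K}$, so that both jump contributions collapse (after Cauchy--Schwarz and finite overlap) into the facet estimator times $\|\nabla v\|_\O$. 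Only interior facets contribute, since facets on $\partial\triangle_h$ meet neither $\O$ nor $\O_h$. For the consistency term $\langle g_h-u_h, D_{\bfn_h}v_h\rangle_{\partial \O_h}$ I write it as the product of $h_K^{-1/2}\|g_h-u_h\|_{\Gamma_K}$ (the boundary estimator factor) and $h_K^{1/2}\|D_{\bfn_h}v_h\|_{\Gamma_K}$, and use $\|\nabla v_h\|_{\Gamma_K} \lesssim h_K^{-1/2}\|\nabla v_h\|_K$ (which follows from \cref{eq:boundary_trace} applied to the piecewise-constant $\nabla v_h$), so the second factor is controlled by $\|\nabla v_h\|_{\triangle_h}\lesssim\|\nabla v\|_\O$. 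The penalty term $\tfrac{\beta}{h_K}\langle g_h-u_h,v_h\rangle_{\Gamma_K}$ is split as $(h_K^{-1/2}\|g_h-u_h\|_{\Gamma_K})(h_K^{-1/2}\|v_h\|_{\Gamma_K})$; here \cref{eq:boundary_trace} gives $h_K^{-1/2}\|v_h\|_{\Gamma_K}\lesssim h_K^{-1}\|v_h\|_K + \|\nabla v_h\|_K$, and \cref{poincare} together with interpolation stability yields $\|v_h\|_K\lesssim h_K\|\nabla v\|_{\mathcal{S}_K}$, so that $\sum_K h_K^{-1}\|v_h\|_{\Gamma_K}^2\lesssim\|\nabla v\|_\O^2$. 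Finally the ghost-penalty term $j_h(u_h,v_h)$ is bounded by Cauchy--Schwarz in the $j_h$-bilinear form, whose diagonal factor satisfies $j_h(v_h,v_h)^{1/2}\lesssim\|\nabla v\|_\O$ by \cref{eq:discrete_trace} and interpolation stability. Collecting all five contributions gives \cref{bounds-for-A2}.

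The main obstacle --- and the reason the textbook residual argument does not transfer directly --- is the boundary zone: the terms over $\O\setminus\O_h$ and over $\Gamma_K$ carry $v$ or $v_h$ itself rather than the interpolation error, and a naive elementwise bound would produce constants that blow up as $\partial\O_h$ approaches a mesh vertex. The remedy is the uniform Poincar\'e estimate \cref{poincare} together with the cut-robust trace inequalities \cref{eq:boundary_trace} and \cref{eq:discrete_trace}, all of which have constants independent of the domain--mesh intersection; the only remaining routine point is to verify finite overlap of the neighbourhoods $\mathcal{S}_K$ when passing from the local bounds to the global sums.
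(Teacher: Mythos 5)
Your proposal is correct and follows essentially the same route as the paper: the same splitting of $\mathcal{A}_1$ and $\mathcal{A}_2$, a Scott--Zhang/Cl\'ement quasi-interpolant with its standard local estimates and $H^1$-stability for the terms tested against $v-v_h$ or $v_h$, the cut-robust trace inequalities for the $\Gamma_K$ and ghost-penalty terms, and the Poincar\'e estimate \cref{poincare} for everything in the boundary zone tested against $v$ or $v_h$ directly. The paper's proof is exactly this argument carried out term by term, so no further comparison is needed.
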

\begin{proof}
The first term of $\mathcal{A}_1$ in (\ref{error-rep})
can be bounded directly using the Cauchy-Schwarz inequality and the approximation property
 of the Scott-Zhang interpolation \cite{scott1990finite},
 \begin{equation} \label{residual-1}
 \begin{split}
 	(f , v - v_h)_{\O_h} 
	&\le
	\sum_{K \in \cT_h} \| f \|_{K \cap \O_h} \| v -v_h\|_{K}
\\
&	\lesssim
	\sum_{K \in \cT_h} h_K\| f \|_{K \cap \O_h} \| \nabla v\|_{\triangle_K} \\
	&\lesssim
	\left(\sum_{K \in \cT_h} h_K^2\| f \|^2_{K \cap \O_h} \right)^{1/2} \| \nabla v\|_{\O},
\end{split}
 \end{equation}
 where $\triangle_K$ is the union of elements in $\cT_h$ that shares at least one vertex with $K$.

By \cref{poincare} the second term of $\mathcal{A}_1$ can now be bounded as follows:
  \begin{equation} \label{residual-2}
  \begin{split}
	(f , v)_{\O \setminus \O_h} &
	\le
	 \sum_{K \in \cT_h} \| f\|_{K \cap (\O \setminus \O_h)} \|v\|_{K \cap (\O \setminus \O_h)}
\\
&	 \lesssim 
	 \left(
	\sum_{K \in \cT_h} h_K^2\|f\|_{K \cap (\O \setminus \O_h)}^2 \right)^{1/2} \| \nabla v\|_{\O}.
\end{split}
\end{equation}
(\ref{bound_for_A1}) is then a direct result of (\ref{residual-1}) and (\ref{residual-2}).
	  
We now proceed to bound terms in $\mathcal{A}_2$.
The first term of $\mathcal{A}_2$ can be bounded directly by applying the Cauchy-Schwarz inequality and the approximation property of the Scott-Zhang interpolation,
\begin{equation} \label{bd-term1}
\begin{split}
	&
	 -	 \sum_{K} \sum_{F \in \cE_K} \int_{F\cap \Omega_h}
		\jump{\DnF u_h}(v -v_h)\,ds 
\\
&\qquad \le	 \sum_{K} \sum_{F \in \cE_K} 
		\|\jump{\DnF u_h}\|_{F\cap \Omega_h} \|v -v_h \|_{F} 
		\\
	&\qquad \lesssim
		 \sum_{K} \sum_{F \in \cE_K} 
		h_K^{1/2}\|\jump{\DnF u_h}\|_{F\cap \Omega_h} \|\nabla v \|_{\Delta_K}\\
	&\qquad \lesssim
	\left( \sum_{K} \sum_{F \in \cE_K} h_K  \|\jump{\DnF u_h}\|_{F\cap \Omega_h}^2 \right)^{1/2} \| \nabla v\|_{\O}.
\end{split}
\end{equation}
To bound the second term in $\mathcal{A}_2$ we apply the trace inequality and \cref{poincare} that
\begin{equation} \label{bd-term2}
\begin{split}
		    &-\sum_{K \in \cT_h} \sum_{F \in \cE_K}
		  \int_{F \cap {(\O \setminus \O_h)}} \jump{\DnF u_h} v \,ds 
		  \\
&\qquad		  \le
		    \sum_{K \in \cT_h} \sum_{F \in \cE_K}
		  \|\jump{\DnF u_h}\|_{F \cap {(\O \setminus \O_h)}} 
		  \|v\|_{F}
		   \\
		    &\qquad \lesssim
		    \sum_{K \in \cT_h} \sum_{F \in \cE_K}
		  \|\jump{\DnF u_h}\|_{F \cap {(\O \setminus \O_h)}} 
		 \left( h_K^{-1/2} \|v\|_{K}  + h_K^{1/2}\|\nabla v\|_{K} \right)
		\\
		 &\qquad \lesssim
		  \sum_{K \in \cT_h} \sum_{F \in \cE_K} h_K^{1/2}
		  \|\jump{\DnF u_h}\|_
		  {F \cap {(\O \setminus \O_h)}} \|\nabla v\|_{\mathcal{S}_K} \\
		  &\qquad \lesssim
		  \left(  \sum_{K \in \cT_h} \sum_{F \in \cE_K} h_K
		  \|\jump{\DnF u_h}\|^2_
		  {F \cap {(\O \setminus \O_h)}} \right)^{1/2} \|\nabla v\|_{\O}.
\end{split}
\end{equation}
The jump penalty term in $\mathcal{A}_2$ can be bounded by applying the Cauchy-Schwarz, triangle, trace, and the inverse inequalities and the stability  
of the interpolator:
\begin{equation} \label{bd-term3}
\begin{split}
	 j_{h}(u_h,v_h) &\le 
	  \gamma\sum_{F \in \mcF_{h}}  h_F \|\jump{ \DnF u_h}\|_{F} 
	  \|\jump{\DnF v_h}\|_{F}
	  \\
	 & \lesssim
	  \sum_{F \in \mcF_{h}} h_F^{1/2}\|\jump{ \DnF u_h}\|_{F} 
	 \left (\sum_{K \in \{K_F^+ , K_F^-\}}h_F^{1/2}\|  \nabla v_h|_K\|_{F} \right)
	\\
	 &\lesssim \sum_{F \in \mcF_{h}} h_F^{1/2} \|\jump{ \DnF u_h}\|_{F} 
	 \left(
	 \sum_{K \in \{K_F^+ , K_F^-\}} \| \nabla v_h\|_{K}  \right)
	 \\
	  &\lesssim \sum_{F \in \mcF_{h}} h_F^{1/2}\|\jump{\DnF u_h}\|_{F} 	  
	  \| \nabla v\|_{ \triangle_F}
	  \\
	  &
	  \lesssim
	  \left( \sum_{F \in \mcF_{h}} h_F \|\jump{\DnF u_h}\|_{F}^2  \right)^{1/2} \| \nabla v\|_{\O}
	  ,
\end{split}
\end{equation}
where  $K_F^+ (K_F^-)$ is the element with $\bfn_F (-\bfn_F)$ being its outer unit normal on $F$ and $\triangle_F := \triangle_{K_F^+} \cup \triangle_{K_F^-}$.

The remaining two terms in $\mathcal{A}_2$ can be bounded by applying the Cauchy-Schwarz and trace inequalities, \cref{poincare} and the stability of the interpolator:
\begin{equation} \label{bd-term5}
\begin{split}
	& \sum_{K \in \cT_h^b} \dfrac{\beta}{h_K}\left<g_h-u_h, v_h \right>_{\Gamma_K}
	+
	\left< g_h-u_h, D_{\bfn_h} v_h\right>_{\partial \Omega_h} \\
	&\qquad \lesssim
	\sum_{K \in \cT_h^b} \left( h_K^{-1} \|g_h-u_h\|_{\Gamma_K} \	\|  v_h\|_{\Gamma_K}
	+
	\|g_h-u_h\|_{\Gamma_K} \|D_{\bfn_h} v_h\|_{\Gamma_K}\right)
	\\
	&\qquad \lesssim
		\sum_{K \in \cT_h^b} h_K^{-1} \|g_h-u_h\|_{\Gamma_K} 
		\left( h_K^{-1/2}\|  v_h\|_{K} +  h_K^{1/2}\|\nabla  v_h\|_{K}  \right)
		\\
	&\qquad \lesssim
	\sum_{K \in \cT_h^b} h_K^{-1/2} \|g_h-u_h\|_{\Gamma_K} 
	\|  \nabla v\|_{\mathcal{S}_K \cup \Delta_K}
\\
&\qquad	\lesssim
	\left( \sum_{K \in \cT_h^b} h_K^{-1} \|g_h-u_h\|^2_{\Gamma_K}  \right)^{1/2} \|\nabla v\|_{\O}.
\end{split}
\end{equation}

Finally, combining (\ref{bd-term1})-(\ref{bd-term5}) and the Young's inequality yields (\ref{bounds-for-A2}). This completes the proof of the lemma.
\end{proof}

\section{Efficiency}\label{sec:4}
In this section we prove the efficiency of the error indicator introduced in \cref{indicator}. 
For the classical finite element method the efficiency is usually proved in a local fashion 
by applying the local facet and element bubble functions. For elements that are not intersected with the boundary and whose facets do not belong to the ghost penalty set we are able to prove the efficiency using the classical bubble technique.

\begin{lem}\label{lem:effi-for-normal-elements}
Let $K$ be a given element in $ \cT_h$ such that $K \not \in \cT_h^b$ and $\cE_K \cap \mathcal{F}_h = \emptyset$.
Then the following local efficiency result holds:
\begin{equation}
\eta_K \le C_e \| \nabla (u-u_h)\|_{\tilde\Delta_K},
\end{equation}
where $\tilde \Delta_K$
is a local neighborhood of $K$ and the efficiency constant $C_e$ does not depend on the mesh size nor the domain-mesh intersection.
\end{lem}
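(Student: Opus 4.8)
The plan is to prove this by the classical residual efficiency argument based on interior and facet bubble functions; the point is that for a \emph{normal} element the cut plays no role, so the standard fitted-mesh technique applies on a patch contained in $\O \cap \O_h$, and robustness with respect to the intersection is automatic. First I would carry out a geometric reduction. Since $K \notin \cT_h^b$, the set $\Gamma_K = K \cap \partial \O_h$ is empty, so the Nitsche boundary residual $h_K^{-1}\|g_h - u_h\|_{\Gamma_K}^2$ in \cref{indicator} vanishes. Because moreover $\cE_K \cap \mathcal{F}_h = \emptyset$, neither $K$ nor any of its face neighbours meets $\partial \O_h$; invoking the smallness of $h$ together with \cref{eq:geomassum-a} (which places $\partial \O$ within $O(h)$ of $\partial \O_h$) one concludes that for $h$ small enough the entire bubble patch lies inside $\O \cap \O_h$. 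In particular $K \cap \O_h = K \cap \O = K$, the continuous equation $-\Delta u = f$ holds on every element of the patch, and $\eta_K^2$ collapses to the two standard residual terms $h_K^2\|f\|_K^2 + \sum_{F \in \cE_K \cap \cE_I}\tfrac{h_F}{2}\|\jump{\DnF u_h}\|_F^2$ together with the boundary-correction term $\|\nabla \tilde e\|_{K\cap\O}^2$.

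For the element residual I would use the interior bubble $b_K$ and the test function $w_K := f_K b_K$, where $f_K$ is the $L^2(K)$-projection of $f$ onto a fixed polynomial space. Bubble norm equivalence gives $\|f_K\|_K^2 \lesssim (f_K, w_K)_K$; writing $(f_K, w_K)_K = (f, w_K)_K - (f - f_K, w_K)_K$, integrating by parts using $-\Delta u = f$ on $K$ and $\Delta u_h = 0$ (since $u_h$ is affine), and noting $w_K \in H_0^1(K)$, yields $(f, w_K)_K = (\nabla e, \nabla w_K)_K$ with $e = u - u_h$. The inverse estimates $\|\nabla w_K\|_K \lesssim h_K^{-1}\|f_K\|_K$ and $\|w_K\|_K \le \|f_K\|_K$ then give $h_K \|f_K\|_K \lesssim \|\nabla e\|_K + h_K\|f - f_K\|_K$, and the triangle inequality upgrades this to a bound on $h_K\|f\|_K$ modulo the higher-order data oscillation $h_K\|f - f_K\|_K$. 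The facet jumps are handled identically using the facet bubble $b_F$ and $w_F := \jump{\DnF u_h}\,b_F$ extended over the patch $\omega_F = K_F \cup K_F'$; since $F \notin \mathcal{F}_h$ this is a standard uncut element pair lying in $\O$, integration by parts over $\omega_F$ gives $\int_F \jump{\DnF u_h}\, w_F = -(\nabla e, \nabla w_F)_{\omega_F} + (f, w_F)_{\omega_F}$, and the bubble scalings deliver $h_F^{1/2}\|\jump{\DnF u_h}\|_F \lesssim \|\nabla e\|_{\omega_F} + h_F\|f\|_{\omega_F}$, the last term being absorbed into the already-estimated element residual.

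It remains to dispose of $\|\nabla \tilde e\|_{K\cap\O}$. Here I would appeal to the fact, emphasised in the remark following \cref{lem:error-rep}, that $\tilde e$ is a boundary-correction function whose natural construction is supported in the boundary zone $\bigcup_{K' \in \cT_h^b} K'$ where the discrete and continuous boundaries disagree; for such a localized $\tilde e$ and a normal element $K$, which by hypothesis lies away from this zone, one has $\tilde e \equiv 0$ on $K$, so the term drops out. Summing the finitely many facet and element contributions and enlarging the patch to the vertex neighbourhood $\tilde\Delta_K$ then gives $\eta_K \lesssim \|\nabla(u - u_h)\|_{\tilde\Delta_K}$ with a constant depending only on shape-regularity. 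The main obstacle, and the step deserving the most care, is precisely the geometric reduction of the first paragraph: one must verify that the defining conditions $K \notin \cT_h^b$ and $\cE_K \cap \mathcal{F}_h = \emptyset$, combined with the smallness of $h$, genuinely force the whole bubble patch into $\O \cap \O_h$, since it is this containment that lets the standard argument run unchanged and renders the constant independent of the domain-mesh intersection. Everything else is the textbook bubble-function calculation.
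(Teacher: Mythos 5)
Your proposal is correct and follows essentially the same route as the paper: the paper's proof of this lemma is simply a pointer to the classical bubble-function efficiency argument (Verf\"urth; Cai et al.), which is exactly the interior/facet-bubble computation you carry out, and your treatment of the boundary-correction term agrees with the paper's remark that $\tilde e$ is constructed to vanish on regular elements. The geometric reduction you rightly flag as the delicate step is implicitly assumed in the paper (a ``normal'' element and its bubble patch are taken to lie in $\O \cap \O_h$), so nothing in your argument departs from the intended proof.
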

\begin{proof}
	The proof is classical and we refer \cite{verfurth1994posteriori,cai2017residual}.
\end{proof}

\begin{remark}
For the regular elements, we take the boundary correction error $\|\nabla \tilde e\|_{K \cap \O}$ to be $0$ since we can always design $\tilde e$ in such a way that it vanishes inside regular elements in order to avoid over-estimation (see \cref{subsec:boundary-error-computation}). 
\end{remark}

\begin{figure}[h]\label{fig:triangle-demo}
\centering
\includegraphics[width=0.40\textwidth]{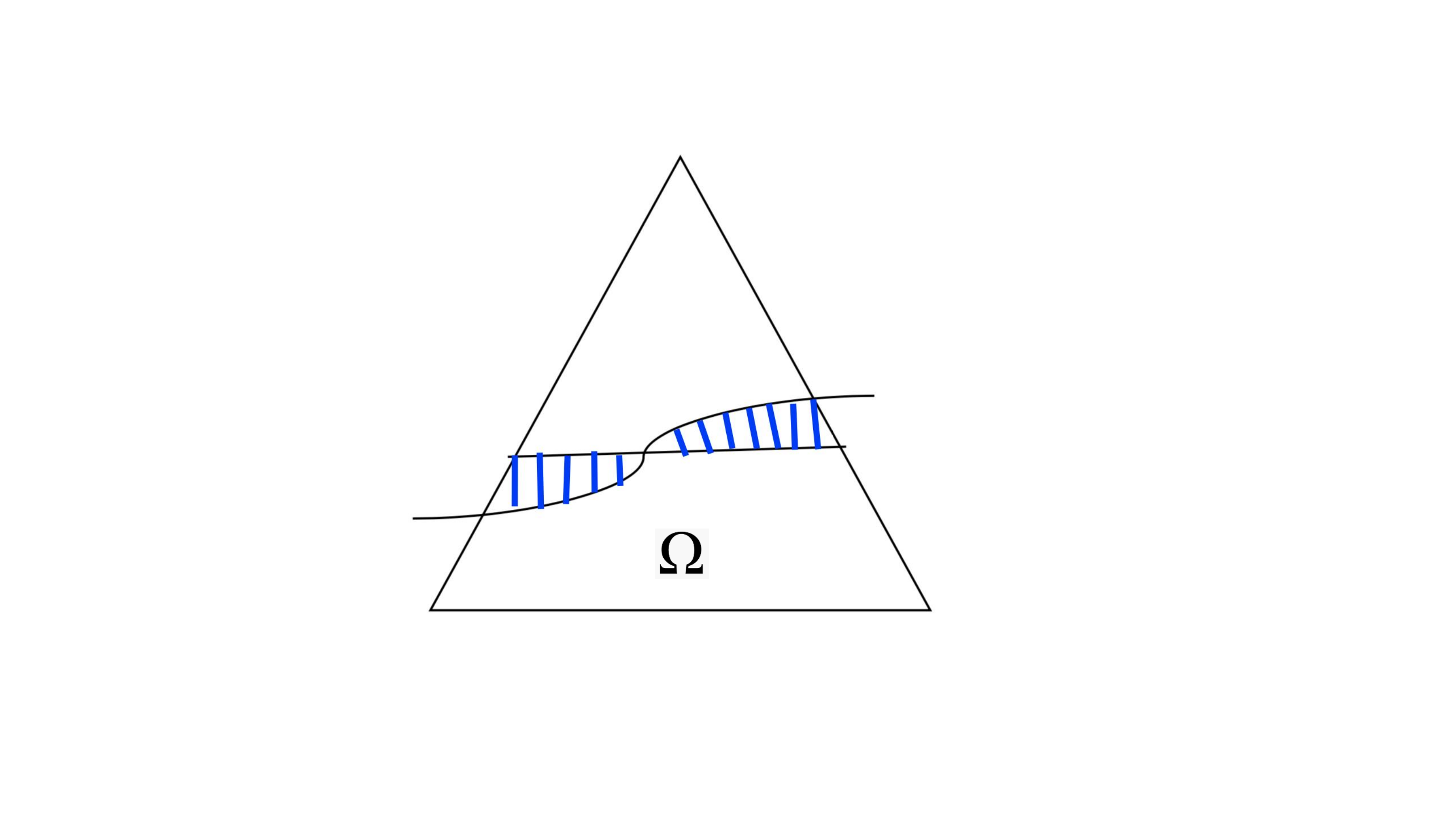}
\caption{An irregular element $K$ and $D_K$ (the shaded area)}
\end{figure}
For an element $K$ that is irregular, i.e., $K \in \cT_h^b$ or $\cE_K \cap \mathcal{F}_h \neq \emptyset$, applying the same bubble technique for cut elements unfortunately will result in the dependence on the location of domain-mesh intersection. 
As an alternative we prove the efficiency for the term of ghost penalty as a whole. The efficiency bounds for the element residual on irregular elements can be then also proved with the aid of the efficiency result for the ghost penalty and the numerical scheme.

For each $K$ define $E_K = \partial K \cap ((\Omega \cup \Omega_h)
\setminus (\Omega \cap \Omega_h))$ and  $D_K = K \cap ((\Omega \cup \Omega_h) \setminus (\Omega \cap \Omega_h))$, see \cref{fig:triangle-demo} for an illustration.

We first prove a discrete Poincare inequality that will be useful in the following efficiency proof.
\begin{lem} \label{lem:boundary-region-bound}
For any $v \in V_h$ the following estimate is true:
      \begin{equation}\label{f-0}
      \sum_{K \in \cT_h} (h_K^{-2}\| v\|_{D_K}^2 + h_K^{-1} \| v\|_{E_K}^2) \lesssim 
       \tn v \tn^2_h + \tn v_h\tn^2_{j_h},
  \end{equation} 
where $\tn v_h \tn_{j_h}^2 = j_h(v_h, v_h)$.
\end{lem}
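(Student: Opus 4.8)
The plan is to reduce the weighted $L^2$-norms over the sliver regions $D_K$ and $E_K$ to two quantities that are already controlled by the right-hand side: the discrete-boundary traces $h_K^{-1}\|v\|_{\Gamma_K}^2$, which enter $\tn v \tn_h^2$ on cut elements through \cref{def:ene_norm_h}, and full-element gradient norms $\|\nabla v\|_K^2$, which after summation are controlled by $\|\nabla v\|_{\Omega_h}^2 + j_h(v,v)$ via the ghost penalty. The geometric input is that, by \cref{eq:geomassum-a}, the symmetric difference $(\Omega\cup\Omega_h)\setminus(\Omega\cap\Omega_h)$ lies in a tubular neighbourhood of $\partial\Omega_h$ of width $O(h)$. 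Consequently $D_K$ and $E_K$ are nonempty only for $K\in\cT_h^b$ or for an element separated from a cut element by $O(h)$, and in every case $D_K$ and $E_K$ sit within distance $O(h_K)$ of a flat discrete-boundary piece $\Gamma_{K'}$ with $K'\in\cT_h^b$.

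First I would treat a cut element $K\in\cT_h^b$. Since $v|_K$ is affine, a first order Taylor expansion anchored on $\Gamma_K$ gives, for $x$ in the $O(h_K)$-wide strip containing $D_K$ and $E_K$, the pointwise bound $|v(x)|^2\lesssim |v(\pi(x))|^2 + h_K^2|\nabla v|^2$, where $\pi$ denotes the projection onto $\Gamma_K$; the assumed flatness of $\Gamma_K$ from \cref{sec:2} makes $\pi$ a bounded-Jacobian map with constants independent of how $\partial\Omega_h$ cuts $K$. Integrating over the $d$-dimensional strip $D_K$ and over the $(d-1)$-dimensional piece $E_K\subset\partial D_K$, and using $|D_K|\lesssim h_K|\Gamma_K|$, $|E_K|\lesssim|\Gamma_K|$ together with the scaling $\|\nabla v\|_K^2\simeq h_K^d|\nabla v|^2$, yields
\[
h_K^{-2}\|v\|_{D_K}^2 + h_K^{-1}\|v\|_{E_K}^2 \lesssim h_K^{-1}\|v\|_{\Gamma_K}^2 + \|\nabla v\|_K^2 .
\]
Next I would handle an element $K\notin\cT_h^b$ with $D_K\cup E_K\neq\emptyset$. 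Such $K$ carries no $\Gamma_K$-term, so the trace on $\partial\Omega_h$ must be imported from a cut element $K'\in\cT_h^b$ with $\mathrm{dist}(K,K')=O(h)$, whose existence follows from the tube property and the thickness assumption of \cref{sec:2}. Writing $v$ along a bounded-length chain of elements joining $K$ to $K'$, the same affine Taylor estimate applies on each element in the chain, while the mismatch of the elementwise-constant gradients across the connecting faces is exactly $\jump{\DnF v}$ and is controlled by $j_h(v,v)$. This produces the analogue of the displayed bound with $\|v\|_{\Gamma_K}^2$ replaced by $\|v\|_{\Gamma_{K'}}^2$ and with additional gradient and ghost-penalty contributions supported on the chain.

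Finally I would sum over all $K$. Each cut element $K'$ serves as an anchor for only a uniformly bounded number of elements, so $\sum h_{K'}^{-1}\|v\|_{\Gamma_{K'}}^2\lesssim \tn v \tn_h^2$; the face contributions sum to $\lesssim j_h(v,v)=\tn v_h \tn_{j_h}^2$; and the gradient contributions satisfy $\sum_{K\in\cT_h}\|\nabla v\|_K^2 = \|\nabla v\|_{\triangle_h}^2 \lesssim \|\nabla v\|_{\Omega_h}^2 + j_h(v,v) \lesssim \tn v \tn_h^2 + \tn v_h \tn_{j_h}^2$, where the middle inequality is the ghost-penalty stability estimate of \cite{Bu10,MasLarLog14}, valid under the thickness assumption and with constants independent of the geometry/mesh configuration. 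Combining these bounds gives \cref{f-0}. The hard part will be the robust, cut-independent control of the full-element gradient $\|\nabla v\|_K^2$ on cut elements whose physical overlap $|K\cap\Omega_h|$ may be arbitrarily small, which is precisely what forces the ghost-penalty term into the bound, together with the bookkeeping needed to anchor $v$ on the nearest discrete-boundary piece $\Gamma_{K'}$ and to verify that the chain of faces transporting both value and gradient lies in $\mcF_h$, so that all transported quantities are genuinely absorbed into $j_h(v,v)$ with constants independent of the location of the domain-mesh intersection.
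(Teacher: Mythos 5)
Your overall strategy coincides with the paper's: reduce the sliver norms on $D_K$ and $E_K$ to full-element quantities, anchor the value of $v$ at the discrete-boundary traces $h_{K'}^{-1}\|v\|^2_{\Gamma_{K'}}$ through a Poincar\'e-type argument, and move the gradient from the whole patch back into $\Omega_h$ by paying the ghost-penalty jumps. The paper packages the first step as the trace inequality $h_K^{-1}\|v\|_{E_K}^2 \lesssim h_K^{-2}\|v\|_{K}^2$, the second as a patch Poincar\'e inequality whose right-hand side carries the boundary traces of \emph{all} elements meeting the patch, and the third as a local norm equivalence; your Taylor-expansion and chaining construction is a more explicit rendering of the same three steps.

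There is, however, one step that is false as literally stated: for a cut element you anchor on $\Gamma_K$ itself and claim $|D_K|\lesssim h_K|\Gamma_K|$ and $|E_K|\lesssim|\Gamma_K|$. Testing your displayed per-element bound with a constant function $v\equiv c$ forces $h_K^{-2}|D_K|\lesssim h_K^{-1}|\Gamma_K|$, which fails whenever $\partial\Omega_h$ merely clips a corner of $K$, so that $|\Gamma_K|$ is arbitrarily small while the symmetric-difference slab $D_K$ still occupies an $O(h_K^d)$ portion of $K$ because the relevant boundary facet lives mostly in a neighbouring element. Such cut elements must be treated exactly like your ``non-cut'' case, i.e., anchored on a nearby non-degenerate $\Gamma_{K'}$ via the chain; this is what the paper does implicitly by summing $h_{K'}^{1/2}\|v\|_{\Gamma_{K'}}$ over all $K'$ meeting a patch $S_K$ whose intersection with the discrete boundary has diameter of order $h_K$. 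Since your own chaining mechanism supplies this repair, the flaw is one of case-splitting rather than of substance. The remaining caveat you raise yourself --- that every face used for transport must lie in $\mcF_h$ so the jumps are genuinely absorbed into $j_h(v,v)$ --- is shared by the paper's proof, whose norm-equivalence step likewise uses jumps over all interior faces of the patch while $j_h$ only sums over $\mcF_h$; both arguments rest on the implicit assumption that the symmetric difference stays within one layer of elements around $\partial\Omega_h$.
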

\begin{proof}

First, for each $K$ such that $E_K, D_K \neq \emptyset$, note that
using the trace inequality \cref{eq:discrete_trace} we see that
\[
h_K^{-1} \| v\|_{E_K}^2 \leq h_K^{-1} \| v\|_{\partial K}^2 \lesssim h_K^{-2} \| v\|_{K}^2.
\]
It follows that
\[
h_K^{-2}\| v\|_{D_K}^2 + h_K^{-1} \| v\|_{E_K}^2 \lesssim h_K^{-2} \| v\|_{K}^2.
\]
\textcolor{black}{Now observe that there exists} a set denoted by $S_K \subset \triangle_h$ such that
$E_K, D_K \subset S_K$, $\mbox{diam}(S_K \cap \O_h)$ is of order $h_K$
and  for which using Poincar\'e's inequality
\begin{equation}\label{f-1}
	\| v\|_{K} \lesssim h_K\| \nabla v\|_{S_K} + 
	\sum_{K' \cap S_K \neq 0} h_{K'}^{1/2} \|v\|_{\Gamma_{K'}}.
\end{equation}
 By the equivalence of norms theorem we also have
  \begin{equation}\label{f-2}
	\| \nabla v\|_{S_K} \le \| \nabla v\|_{S_K \cap \O_h} + 
	\sum_{\substack{ F \in \cE_I \\F \subset S_K}}
		h_F^{1/2}\| \jump{\DnF v}\|_{F}.
\end{equation}
Combining \cref{f-1} and \cref{f-2}
gives \cref{f-0}. This completes the proof of the lemma.
\end{proof}

We assume that for each $K \in \cT_h^b$ there exists at least one vertex, say $z$, of $K$ such that
$\mbox{diam}(\o_z \cap \O_h) = O(h_K)$ where $\o_z$ is the union of all all elements sharing $z$ as a vertex. 
Define
\[
 \mbox{osc}(f) = \sum_{K} \left(h_K^2\| f - f_z\|_{\o_z \cap \O_h}^2  + 
 h_K^2\|f - f_z\|_{K \cap (\O \setminus \O_h)}^2 \right)^{1/2},
\]
where $f_z$ is chosen such that 
$f_z = \mathop{\mathrm{argmin}}_{c \in \mathcal{R}} h_K\| f -
c\|_{\o_z \cap \O_h}$. We will now proceed and prove a best
approximation result for the element residuals of the cut
elements. First we give a Lemma showing that the element residual of a cut element
can be bounded by the ghost penalty term, the boundary residual and
oscillation in data. 
\begin{lemma}\label{lem:f-1}
For each $K \in \cT_h^b$ its element residual has the following bound:
\begin{equation}\label{efficiency-f}
	\begin{split}
		 h_K\| f \|_{K \cap \O_h} \lesssim&
	\sum_{F \subset \bar\o_z \cap \cE_I} h_F^{1/2}\| \jump{\DnF u_h} \|_{F \cap \O_h}
	+ \left(\sum_{K \subset \o_z} h_K^{-1}\| u_h - g_h\|^2_{\Gamma_K}\right)^{\frac12}\\
	&+
	h_K\| f - f_z\|_{\o_z \cap \O_h} + h_K\|f - f_z\|_{K \cap (\O \setminus \O_h)}.
	\end{split}
\end{equation}

\end{lemma}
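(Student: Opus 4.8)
The plan is to bound the element residual by testing the discrete equation \eqref{eq:fem} against a cut-robust $\mathbb{P}_1$ test function supported on the vertex patch $\o_z$, and then to exploit that $u_h$ is piecewise affine so that $\Delta u_h=0$ on each element. First I would replace $f$ by its local mean $f_z$: since $K\subset\o_z$,
\[
\|f\|_{K\cap\O_h}\le\|f_z\|_{\o_z\cap\O_h}+\|f-f_z\|_{\o_z\cap\O_h},
\]
and the last term, multiplied by $h_K$, is exactly the oscillation term on the right of \eqref{efficiency-f}, so it remains to bound $h_K\|f_z\|_{\o_z\cap\O_h}$. Because $z$ is chosen so that $\mbox{diam}(\o_z\cap\O_h)\sim h_K$, the patch is non-degenerate, and a scaling argument on the reference configuration provides a function $v_h\in V_h$ supported in $\o_z$ satisfying $\|f_z\|_{\o_z\cap\O_h}^2\lesssim(f_z,v_h)_{\o_z\cap\O_h}$ together with $\|v_h\|_{K'}\lesssim\|f_z\|_{\o_z\cap\O_h}$ and $\|\nabla v_h\|_{K'}\lesssim h_K^{-1}\|f_z\|_{\o_z\cap\O_h}$ for each $K'\subset\o_z$, with constants independent of the cut.

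The central step is an algebraic identity for $(f,v_h)_{\O_h}$. Splitting $(f_z,v_h)_{\o_z\cap\O_h}=(f,v_h)_{\o_z\cap\O_h}+(f_z-f,v_h)_{\o_z\cap\O_h}$, the last term is $\lesssim\|f-f_z\|_{\o_z\cap\O_h}\|f_z\|_{\o_z\cap\O_h}$ and again feeds the oscillation. For the first term I use that $v_h$ is supported in $\o_z$, so $(f,v_h)_{\o_z\cap\O_h}=(f,v_h)_{\O_h}$, and then substitute from \eqref{eq:fem}: expanding $a_h(u_h,v_h)=l_h(v_h)$, integrating $(\nabla u_h,\nabla v_h)_{\O_h}$ by parts elementwise and using $\Delta u_h|_K=0$, the boundary flux $\langle D_{\bfn_h}u_h,v_h\rangle_{\partial\O_h}$ cancels exactly as in \cref{lem:error-rep}, leaving
\begin{equation*}
(f,v_h)_{\O_h}=-\frac{1}{2}\sum_{K\in\cT_h}\sum_{F\in\cE_K}\int_{F\cap\O_h}\jump{\DnF u_h}\,v_h\,ds-\langle u_h-g_h,D_{\bfn_h}v_h\rangle_{\partial\O_h}+\sum_{K\in\cT_h^b}\frac{\beta}{h_K}\langle u_h-g_h,v_h\rangle_{\Gamma_K}+j_h(u_h,v_h),
\end{equation*}
all contributions localizing to $\o_z$ by the support of $v_h$.

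It then remains to estimate the four groups of terms and collect the powers of $h_K$. The interior jump term is bounded by Cauchy--Schwarz and the trace inequality \eqref{eq:discrete_trace}, giving $\|v_h\|_F\lesssim h_K^{-1/2}\|f_z\|_{\o_z\cap\O_h}$ and hence the contribution $\sum_{F\subset\bar\o_z\cap\cE_I}h_F^{1/2}\|\jump{\DnF u_h}\|_{F\cap\O_h}$. The two Nitsche terms are treated with the cut-independent boundary trace inequality \eqref{eq:boundary_trace} applied to $v_h$ and to the elementwise constant $\nabla v_h$, which yields $\|v_h\|_{\Gamma_K},\,h_K\|D_{\bfn_h}v_h\|_{\Gamma_K}\lesssim h_K^{-1/2}\|f_z\|_{\o_z\cap\O_h}$ and thus the boundary residual $\big(\sum_{K\subset\o_z}h_K^{-1}\|u_h-g_h\|_{\Gamma_K}^2\big)^{1/2}$. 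The ghost penalty term is bounded by Cauchy--Schwarz and an inverse estimate for $\|\jump{\DnF v_h}\|_F$, again landing in the facet-jump sum. Dividing by $\|f_z\|_{\o_z\cap\O_h}$ and multiplying by $h_K$ gives \eqref{efficiency-f}; the term $h_K\|f-f_z\|_{K\cap(\O\setminus\O_h)}$ is non-negative and is appended only so that the bound is stated through the full oscillation.

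The hardest part is the construction of $v_h$: one needs the three scaling properties with constants uniform in the mesh--boundary configuration, and this is precisely where $\mbox{diam}(\o_z\cap\O_h)\sim h_K$ is indispensable, ensuring that the reference patch on which the norm equivalence is performed is non-degenerate no matter how $\partial\O_h$ slices the mesh. A second delicate point is the ghost penalty, whose jumps live on whole facets $F\in\mcF_{h}$ rather than on $F\cap\O_h$: one must argue that, for facets meeting the non-degenerate patch, the full-facet jump norms are comparable to the $F\cap\O_h$ jumps retained in \eqref{efficiency-f}, or defer these to the separate global ghost-penalty efficiency estimate.
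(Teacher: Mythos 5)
Your proposal follows essentially the same route as the paper's proof: split off the oscillation via $f_z$, test the discrete equation with a cut-robust function supported on the patch $\o_z$, integrate by parts using $\Delta u_h=0$ so the Nitsche flux cancels, and bound the resulting facet-jump, boundary-residual and ghost-penalty terms by trace and inverse inequalities. The only difference is cosmetic: where you posit an abstract $v_h$ with scaling properties, the paper takes the explicit choice $w_z=f_z\lambda_z$ with $\lambda_z$ the hat function at $z$, for which the norm equivalence $\|f_z\|^2_{\o_z\cap\O_h}\lesssim\|f_z\lambda_z^{1/2}\|^2_{\O_h}$ is exactly where the assumption $\mathrm{diam}(\o_z\cap\O_h)=O(h_K)$ enters, as you correctly identify.
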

\begin{proof}
By the triangle inequality we firstly have
	\begin{equation}\label{split-f}
		h_K\| f \|_{K \cap \O_h} 
		\le
		  h_K\|f - f_z\|_{K \cap \O_h }  +
		h_K\| f_z\|_{\o_z \cap \O_h},
	\end{equation}
	where $f_z$ at this point can be chosen as any arbitrary constant.
Let $\lambda_z$ be the barycentric hat function associated with $z$ then we also have
\begin{equation} \label{split}
	\|f_z\|^2_{\o_z \cap \O_h}  \lesssim \|f_z \lambda_z^{1/2}\|_{\O_h}^2
	= (f, f_z\lambda_z)_{\O_h} - (f - f_z, f_z\lambda_z)_{\O_h}.
\end{equation}
Let $w_z = f_z \lambda_z$. Applying \cref{eq:fem} and integration by parts yields
\begin{equation}\label{estimate-of-f-fz}
	\begin{split}
	&(f, f_z\lambda_z)_{\O_h} =
	(\nabla u_h,\nabla w_z)_{\Omega_h} 
	- \left<D_{\bfn_h} u_h,w_z\right>_{\partial \Omega_h} 
	- \left<u_h - g_h, D_{\bfn_h} w_z\right>_{\partial \Omega_h} \\
	&+   \sum_{K' \in \cT_h^b} \dfrac{\beta}{h_{K'}}\left< u_h- g_h,w_z\right>_{\Gamma_{K'}}
	+
	\gamma\sum_{F \in \mcF_{h}}  h_F \left< \jump{ \DnF u_h},
	 \jump{ \DnF w_z}\right>_{F}\\
	=& 
	\sum_{F \subset \o_z} \left< \jump{\DnF u_h}, w_z \right>_{F \cap \O_h}
	- \left<u_h - g_h, D_{\bfn_h} w_z\right>_{\partial \Omega_h} \\
	&+   \sum_{K' \in \cT_h^b} \dfrac{\beta}{h_{K'}}\left< u_h- g_h,w_z\right>_{\Gamma_{K'}}
	+
	\gamma\sum_{F \in \mcF_{h}}  h_F \left< \jump{ \DnF u_h},
	 \jump{ \DnF w_z}\right>_{F \cap \bar \o_z}\\
	\lesssim&
	\sum_{F \subset \o_z} h_F^{-1/2}\| \jump{\DnF u_h} \|_{F \cap \O_h} \|f_z\|_{\o_z \cap \O_h} 
	+ 
	\sum_{ \substack{K' \subset \o_z \\ K' \in \cT_h^b}} h_{K'}^{-3/2}\| u_h - g_h\|_{\Gamma_{K'}} \|f_z\|_{\o_z \cap \O_h} \\
	& +
	\sum_{ \substack{F \in \mcF_{h}\\ F \subset \bar\o_z }}  h_F^{-1/2} \| \jump{\DnF u_h} \|_{F \cap \bar \o_z} \|f_z\|_{\o_z \cap \O_h}.
	\end{split}
\end{equation}
The last inequality utilizes the fact that $f_z$ is a constant and the trace and inverse inequalities.
Combining \cref{split}, \cref{estimate-of-f-fz} and the Cauchy Schwarz inequality gives
\begin{equation}
	\begin{split}
	h_K\| f_z \|_{\O_h \cap K} \lesssim&
	\sum_{F \subset \bar\o_z \cap \cE_I} h_F^{1/2}\| \jump{\nabla u_h\cdot \bfn_F} \|_{F}
\\ &\qquad 
	+ \left(\sum_{K' \subset \o_z} h_{K'}^{-1}\| u_h - g_h\|^2_{\Gamma_{K'}}\right)^{\frac12}
	+	\| f - f_z\|_{\o_z \cap \O_h},
	\end{split}
\end{equation}
 which, combining with \cref{split-f}, yields \cref{efficiency-f}.
\end{proof}

We now prove the main bound for the residuals in the cut elements.

\begin{thm}
Let $u$ and $u_h$ be the solution to \cref{eq:poissoninterior} and \cref{eq:fem}, respectively.
Then the following best approximation result holds:
	\begin{equation}\label{effi-for-irregular}
	\begin{split}
	&j_h(u_h, u_h) + \sum_{K \in \cT_h^b} h_K^2 \|f\|_{K \cap \O_h}^2 \\
	\le& C_e 
	 \inf_{v_h \in V_h} \left( \tn u - v_h \tn^2 + \tn v_h\tn_{j_h}^2 + \sum_{K \in \cT_h^b} h_{K}^{-1}\| v_h - g_h\|^2_{\Gamma_{K}}+\mbox{osc}(f)^2
	 \right).
	\end{split}
	\end{equation}
	where the constant $C_e$ does not depend on the mesh size nor
        on the domain-mesh intersection.
\end{thm}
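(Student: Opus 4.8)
The plan is to fix an arbitrary $v_h \in V_h$, set $w_h := u_h - v_h \in V_h$, bound the entire left-hand side of \cref{effi-for-irregular} in terms of data attached to $v_h$, and take the infimum only at the very end. The guiding principle is that the ghost penalty renders $a_h$ coercive on the whole active mesh, so a quasi-best-approximation argument is available for $w_h$. As a preliminary step I would extend $u$ to $\tilde u \in H^2$ of a neighbourhood of $\O$, which is legitimate by the elliptic regularity \cref{eq:ellipticregularity} together with a Sobolev extension. The purpose of $\tilde u$ is that $\jump{\DnF \tilde u}=0$ on every interior facet, so that $j_h(\tilde u,\cdot)=0$ and, in particular, $j_h(\tilde u - v_h,\tilde u - v_h)=j_h(v_h,v_h)=\tn v_h\tn_{j_h}^2$.

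The core of the argument is an estimate for the combined quantity $\tn w_h\tn_h^2 + j_h(w_h,w_h)$. Using coercivity of $a_h$ (guaranteed by the choice of $\beta,\gamma$, cf. the remarks following \cref{eq:fem}) together with the discrete equation \cref{eq:fem}, I would write
\[
\tn w_h\tn_h^2 + j_h(w_h,w_h) \lesssim a_h(w_h,w_h) = l_h(w_h) - a_h(v_h,w_h) = \bigl(l_h(w_h) - a_h(\tilde u,w_h)\bigr) + a_h(\tilde u - v_h,w_h).
\]
The last term is controlled by continuity of $a_h$ with respect to $\tn\cdot\tn_h^2 + j_h(\cdot,\cdot)$, giving $a_h(\tilde u - v_h,w_h) \lesssim \bigl(\tn u - v_h\tn + \tn v_h\tn_{j_h}\bigr)\bigl(\tn w_h\tn_h + j_h(w_h,w_h)^{1/2}\bigr)$; here I would compare the discrete norm $\tn \tilde u - v_h\tn_h$ with the continuous norm $\tn u - v_h\tn$ by splitting $\O_h=(\O\cap\O_h)\cup(\O_h\setminus\O)$, using $\tilde u=u$ on $\O$ and trace/extension bounds on the $O(h)$-thin mismatch layer.

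For the consistency term I would integrate by parts elementwise in $(\nabla\tilde u,\nabla w_h)_{\O_h}$; since $\jump{\DnF\tilde u}=0$ all interior facet contributions cancel and the normal-derivative boundary terms cancel against those in $a_h$, so that, using $-\Delta\tilde u=f$ on $\O\cap\O_h$,
\[
l_h(w_h) - a_h(\tilde u,w_h) = (f+\Delta\tilde u,\, w_h)_{\O_h\setminus\O} + \bigl\langle \tilde u - g_h,\, D_{\bfn_h} w_h\bigr\rangle_{\partial\O_h} - \sum_{K\in\cT_h^b}\frac{\beta}{h_K}\bigl\langle \tilde u - g_h,\, w_h\bigr\rangle_{\Gamma_K}.
\]
Every term now lives in the boundary zone: the volume term is supported on $\O_h\setminus\O\subset\cup_K D_K$ and is bounded by $(\sum_K h_K^2\|f+\Delta\tilde u\|_{D_K}^2)^{1/2}$ times $\tn w_h\tn_h + j_h(w_h,w_h)^{1/2}$ via the discrete Poincar\'e inequality \cref{lem:boundary-region-bound}, while the two boundary terms carry the boundary-value-correction factor $\tilde u - g_h = \tilde u(\cdot)-\tilde u(\bfp_h(\cdot))$ of size $O(h)\|\nabla\tilde u\|$, paired with trace-controlled factors of $w_h$. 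Collecting these, using $\|\varrho_h\|_{L^\infty(\partial\O_h\cap K)}=O(h_K)$, and absorbing, one arrives at $\tn w_h\tn_h + j_h(w_h,w_h)^{1/2} \lesssim \tn u - v_h\tn + \tn v_h\tn_{j_h} + \mbox{osc}(f)$.

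With this the two pieces of the left-hand side follow. Since $\tn u_h\tn_{j_h}\le \tn w_h\tn_{j_h}+\tn v_h\tn_{j_h}$, the term $j_h(u_h,u_h)$ is immediately bounded by the right-hand side of \cref{effi-for-irregular}. For the cut-element residuals I would invoke \cref{lem:f-1} and sum its bound over $K\in\cT_h^b$ using the finite overlap of the patches $\o_z$: the facet-jump contributions split into those on $\mcF_h$ (controlled by $j_h(u_h,u_h)$) and the remaining facets interior to $\O_h$ (after the splitting $\jump{\DnF u_h}=\jump{\DnF w_h}+\jump{\DnF v_h}$, the discrete trace inequality \cref{eq:discrete_trace} and $\jump{\DnF\tilde u}=0$ bound these by $\|\nabla w_h\|_{\O_h}^2 + \tn u - v_h\tn^2$ up to data oscillation), while the boundary residual is split as $u_h-g_h = w_h + (v_h-g_h)$ with the $w_h$ part absorbed through a trace inequality. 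Taking the infimum over $v_h\in V_h$ then yields \cref{effi-for-irregular}. I expect the main obstacle to be the consistency step: showing that the volume term on $\O_h\setminus\O$ and the boundary-correction terms on $\partial\O_h$ are genuinely dominated by $\mbox{osc}(f)$ and the best-approximation norm with constants \emph{independent of how $\partial\O_h$ meets the mesh}. The discrete Poincar\'e inequality \cref{lem:boundary-region-bound} is exactly the tool making the mismatch-region estimate robust, and the assumption $\|\varrho_h\|_{L^\infty(\partial\O_h\cap K)}=O(h_K)$ is what keeps the boundary-correction factors at the required order.
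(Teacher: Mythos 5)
Your proposal reproduces the paper's skeleton (coercivity of $a_h$ for $\phi = u_h-v_h$, the discrete Poincar\'e inequality of \cref{lem:boundary-region-bound} for the mismatch region, \cref{lem:f-1} plus a triangle inequality to recover the cut-element residuals), but the central step --- routing the consistency error through an $H^2$ extension $\tilde u$ and a ``continuity'' bound for $a_h(\tilde u - v_h,w_h)$ --- does not deliver the right-hand side of \cref{effi-for-irregular}. The form $a_0$ contains the Nitsche pairing $\left< D_{\bfn_h}(\tilde u - v_h), w_h\right>_{\partial\Omega_h}$ on the \emph{approximate} boundary, whereas the norm $\tn u - v_h\tn$ only controls $D_{\bfn}(u-v_h)$ in $H^{-1/2}(\partial\Omega)$ on the \emph{exact} boundary. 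To bound $h_K^{1/2}\|D_{\bfn_h}\tilde u\|_{\Gamma_K}$ you must invoke an $H^2$ trace inequality on cut elements, which produces $\|\nabla\tilde u\|_K + h_K\|D^2\tilde u\|_K$ over whole elements (including the part outside $\Omega$); neither term is a best-approximation error, an oscillation, or a $\|v_h-g_h\|_{\Gamma_K}$ term, so it cannot be absorbed into the claimed bound. The same problem recurs in your consistency residual: $\left<\tilde u - g_h, \cdot\right>_{\partial\Omega_h}$ is of size $\|\nabla u\|$ on an $O(h)$ strip (under \cref{eq:geomassum-a} only, $\varrho_h = O(h_K)$, there is no extra power of $h$ to spare), and $(f+\Delta\tilde u,\cdot)_{\Omega_h\setminus\Omega}$ carries $h\|\Delta\tilde u\|_{\Omega_h\setminus\Omega}$ --- again absent from the right-hand side of \cref{effi-for-irregular}. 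The $H^2$ extension itself also presupposes regularity of $u$ that the theorem does not assume.

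The paper's proof is organized precisely to avoid these obstructions. It never forms $a_h(u,\cdot)$ or extends $u$ beyond $H^1$: it inserts the weak identity $(f,\phi)_{\Omega} = (\nabla u,\nabla\phi)_{\Omega} - \left<D_{\bfn}u,\phi\right>_{\partial\Omega}$, so the exact solution meets the test function only on $\partial\Omega$ through the $H^{-1/2}$--$H^{1/2}$ duality, which is exactly the term $\|D_{\bfn}(u-v_h)\|_{H^{-1/2}(\partial\Omega)}$ appearing in $\tn u - v_h\tn$. All pairings on $\partial\Omega_h$ involve only the discrete functions $v_h-g_h$, $\phi$ and $D_{\bfn_h}\phi$, for which the cut trace inequality \cref{eq:boundary_trace} gives cut-independent constants and yields precisely the term $\sum_{K\in\cT_h^b}h_K^{-1}\|v_h-g_h\|^2_{\Gamma_K}$. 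Finally, the gradient mismatch $(\nabla v_h,\nabla\phi)_{\Omega\setminus\Omega_h} - (\nabla v_h,\nabla\phi)_{\Omega_h\setminus\Omega}$ is converted by elementwise integration by parts (using that $v_h$ is piecewise linear) into facet jumps $\jump{\DnF v_h}$ on $E_K$, which is how $j_h(v_h,v_h)^{1/2}$ --- and nothing stronger --- enters the bound. If you want to salvage your route you would have to give up the extension and reorganize the terms so that no non-discrete function is ever measured in a weighted $L^2$ norm on $\partial\Omega_h$; at that point you have essentially rederived the paper's decomposition into $\mathcal{A}_1$--$\mathcal{A}_4$.
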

\begin{proof}
By the triangle inequality we have
\begin{equation}\label{two-parts}
	j_h(u_h,u_h) \lesssim j_h(u_h - v_h, u_h - v_h) + j_h(v_h,v_h)
\end{equation}
and
\begin{equation}\label{bound-parts}
\sum_{K \in \cT_h^b}h_K^{-1}\|u_h -
  g_h\|_{\Gamma_K}^2 \lesssim 
\sum_{K \in \cT_h^b}h_K^{-1}\|u_h -
  v_h\|_{\Gamma_K}^2 
  + \sum_{K \in \cT_h^b}h_K^{-1}\| v_h -g_h\|_{\Gamma_K}
\end{equation}
Denote by $\phi =u_h - v_h $. Then
\[
 j_h(u_h - v_h, u_h - v_h) + \sum_{K \in \cT_h^b}h_K^{-1}\|u_h -
  v_h\|_{\Gamma_K}^2  \leq \tn \phi\ \tn_h^2 + \tn \phi \tn_{j_h}^2.
\]
Applying the coercivity result (see \cite{BCHLM15}), \cref{forms} and \cref{eq:fem} gives
\begin{equation}\label{4-parts}
\begin{split}
	&\tn \phi \tn_h^2 + \tn \phi \tn_{j_h}^2 \lesssim j_h(u_h - v_h, \phi) + a_0(u_h - v_h, \phi) 
\\	
	&\qquad = l_h(\phi) -  a_0(v_h, \phi) - j_h(v_h, \phi)
\\
	&\qquad = l_h(\phi) - (f, \phi)_{\Omega}  + (\nabla u,  \nabla \phi)_{\Omega}  -\left<D_{\bfn} u , \phi \right>_{\partial \Omega}
	-  a_h(v_h, \phi) - j_h(v_h, \phi) 
\\
&\qquad \triangleq
	\sum_{i =1}^4 \mathcal{A}_i,
\end{split}
\end{equation}
where
\begin{align*}
	\mathcal{A}_1 &= (f, \phi)_{\O_h} - (f,\phi)_{\O}, 
\\
	\mathcal{A}_2 &= \left<v_h - g_h, \nabla \phi \cdot \bn \right>_{\partial \Omega_h} 
	-
	 \sum_{K \in \cT_h^b} \dfrac{\beta}{h_K} \left< v_h - g_h, \phi \right>_{\Gamma_K},
\\
	\mathcal{A}_3 &= (\nabla u,  \nabla \phi)_{\Omega} -  (\nabla v_h, \nabla \phi)_{\Omega_h}
	+
	 \left< D_{\bfn_h}v_h, \phi\right>_{\partial \Omega_h} -
	 \left< D_{\bfn}u, \phi \right>_{\partial \Omega}, 
	 \\
	 \mathcal{A}_4 &= -j_h(v_h, \phi).
\end{align*}
To estimate $\mathcal{A}_1$, applying the Cauchy-Schwarz inequality and \cref{lem:boundary-region-bound} gives
\begin{equation}\label{A1-estimtate}
\begin{split}
	\mathcal{A}_1 &\le
	\left(  \sum_{K \in \cT_h} h_K^2\|f\|_{D_K}^2 \right)^{1/2}
	\left(  \sum_{K \in \cT_h} h_K^{-2}\| \phi\|_{D_K}^2 \right)^{1/2}\\
	&\lesssim \left(  \sum_{K \in \cT_h} h_K^2\|f\|_{D_K}^2 \right)^{1/2} 
	\left( \tn \phi \tn_h + \tn \phi\tn_{j_h} \right).
\end{split}
\end{equation}
$\mathcal{A}_2$ can be estimated directly using the Cauchy-Schwarz inequality,
\begin{equation}\label{A-2 estimate}
\begin{split}
	\mathcal{A}_2 &\le   
	\left(   \sum_{K \in \cT_h^b}  h_K^{-1} \| v_h - g_h \|_{\Gamma_K}^2 	\right)^{1/2}
        \left( \sum_{K \in \cT_h^b}  h_K\| D_{\bfn_h} \phi\|_{\Gamma_K}^2
        +
         \sum_{K \in \cT_h^b}  h_K^{-1} \| \phi \|_{\Gamma_K}^2 \right)^{1/2}\\
	& \lesssim
	\left(   \sum_{K \in \cT_h^b}  h_K^{-1} \| v_h - g_h \|_{\Gamma_K}^2 	\right)^{1/2}
	\tn  \phi \tn_h 
\end{split}
\end{equation}
To estimate $\mathcal{A}_3$, add and subtract suitable terms to
obtain, 
\begin{equation*}
\begin{split}
\mathcal{A}_3 &=
	(\nabla (u - v_h),  \nabla \phi)_{\Omega} 
	-
	 \left< \nabla (u -v_h ) \cdot \bn, \phi \right>_{\partial \Omega}\\
&\qquad + (\nabla v_h, \nabla \phi)_{\Omega \setminus \Omega_h} -  (\nabla v_h, \nabla \phi)_{\Omega_h \setminus \Omega}
	 +
	 \left< \nabla v_h\cdot \bn, \phi \right>_{\partial \Omega_h}
	 -\left< \nabla v_h\cdot \bn, \phi \right>_{\partial \Omega}.
	\end{split}
\end{equation*}
Observe that using integration by parts we have
\begin{equation*}
\begin{split}
&(\nabla v_h, \nabla \phi)_{\Omega \setminus \Omega_h} -  (\nabla v_h,
\nabla \phi)_{\Omega_h \setminus \Omega} 
+
	 \left< \nabla v_h\cdot \bn, \phi \right>_{\partial \Omega_h}
	 -\left< \nabla v_h\cdot \bn, \phi \right>_{\partial \Omega} 
	 \\
&\qquad  \leq \sum_{K \in
           \mathcal{T}_h^b} \int_{E_K}
         |\jump{\DnF v_h} \phi| \,ds \\
&\qquad  \leq j_h(v_h,v_h)^{\frac12} 
\left( \sum_{K \in \mathcal{T}_h^b}
\|h^{-\frac12} \phi\|^2_{E_K}
\right)^{\frac12}\\
&\qquad \leq j_h(v_h,v_h)^{\frac12} (\tn  \phi \tn_h + \tn  \phi\tn_{j_h})
\end{split}
\end{equation*}
\textcolor{black}{where 
  we used Lemma \ref{lem:boundary-region-bound} for the last inequality}.
Then applying the Cauchy-Schwarz inequality and \cref{lem:boundary-region-bound}  
gives
\begin{equation}\label{A3-aa}
\begin{split}
 \mathcal{A}_3\le& 
	  \|\nabla (u - v_h)\|_{\Omega} \| \nabla \phi\|_{\O}
	  +\| D_{\bn_h} (u -v_h)\|_{H^{-1/2}(\partial \Omega)} \|\phi\|_{H^{1/2}(\partial \O) }\\
	  &+j_h(v_h,v_h)^{\frac12} (\tn  \phi \tn_h + \tn  \phi\tn_{j_h})
          \\
\le & (\|\nabla (u - v_h)\|_{\Omega} +\| D_{\bn_h} (u
-v_h)\|_{H^{-1/2}(\partial \Omega)} +j_h(v_h,v_h)^{\frac12}) (\tn  \phi \tn_h + \tn  \phi\tn_{j_h}).
\end{split}
\end{equation}
Here we also used the inequality 
\[
	\|\phi\|_{H^{1/2}(\partial \O) } \lesssim
	\|\phi\|_{H^1(\Omega)} \lesssim \tn \phi\tn_h + \tn \phi\tn_{j_h}.
\]
Collecting the bounds  (\ref{4-parts})--(\ref{A3-aa}) we see that
\begin{equation}\label{eq:jump_bound}
\tn \phi \tn_h + \tn \phi \tn_{j_h} \lesssim \inf_{v_h \in V_h} (\|\nabla (u -
  v_h)\|_{\Omega} + \| D_{\bn_h} (u -v_h)\|_{H^{-1/2}(\partial
    \Omega)} + j_h(v_h,v_h)^{\frac12})
\end{equation}
 
 Finally, combining \cref{lem:f-1}, \cref{two-parts}, \cref{bound-parts} and \cref{eq:jump_bound} yields 
 \cref{effi-for-irregular}.
This completes the proof of the theorem.
\end{proof}

\begin{remark}
Regarding the efficiency for the  indicator  of the boundary correction error,
we have the following efficiency result:
{\color{black}
\[
	\inf_{\substack{ \tilde e \in H^1(\O) \\ \tilde e = g - u_h \mbox{ on } \partial \O}} \| \nabla \tilde e\|
	= \| u - u_h\|_{H^{1/2}(\partial \O)} \lesssim
	\tn u - u_h\tn.
\]} 
This also indicates that $\tilde e$ needs to be designed properly to avoid over estimation. In \cref{sec:5} we propose an approach to 
design and compute $\tilde e$.
\end{remark}

\section{Numerical Results}\label{sec:5}
In this section we present several numerical examples to validate the performance of the a posteriori error estimator in the adaptive mesh refinement procedure.  The adaptive mesh refinement procedure is set as follows:
\[
	\mbox{Solve} \rightarrow \mbox{Estimate} \rightarrow  \mbox{Mark} \rightarrow \mbox{Refine} \rightarrow \mbox{Solve}.
\]
For the penalty parameters in the finite element method, we set $\beta =10$ and $\gamma =0.1$. For the refinement strategy, we use the D\"orfler marking strategy and the refine rate is set to be ten percent.
Regarding the domain approximation,
for a given $\phi$ being the level set function that satisfies $\phi=0$ on the boundary and negative (positive) inside (outside) the domain $\O$, let $\phi_h$ be the nodal interpolation of $\phi$ with respect to $\cT_h$. And we define 
\begin{equation}\label{domain-approx}
	\partial \O_h = \{ \bfx: \phi_h(\bfx) =0\}.
\end{equation} 

\subsection{Computation of $\| \nabla \tilde e\|_{\O}$}\label{subsec:boundary-error-computation}
In this subsection we introduce one way to compute the  boundary correction error  $\| \nabla \tilde e\|_{\O}$.
We firstly construct a boundary correction mesh,  denoted by $\cT_h^{bc}$, which is finer than the current mesh $\cT_h$ in order to more accurately track the boundary $\partial \O$. More precisely, the new mesh is built inside the union of all intersection elements, i.e., 
\[
	\cup_{K \in \cT_h^{bc}} K \subset \cup_{K \in \cT_h^b}  K.
\]

For each element $K \in \cT$, without loss of generality,  in two dimensions, we assume that $\phi(z_0) \le \phi(z_1) \le \phi(z_2)$ where $z_i, i =0, 1,2$ are vertices of $K$. 
For each $K \in \cT_h^b$
we assume the following must be true: there exists at least one facet in $K$ such that the nodes connected to that facet have both positive and negative values, i.e.,
\[ \phi(z_0) <0 \quad \mbox{and} \quad \phi(z_2)>0. \]
We then further partition the elements in $\cT_h^b$ based on the intersection of the domain and the mesh. The pseudo code for the algorithm is provided in \cref{alg:buildtree}. An example boundary correction mesh for Example 1 is shown in \cref{fig:example-bc-mesh}.

\begin{algorithm}[h!] 
\caption{Build the boundary correction mesh in two dimensions}
\label{alg:buildtree}
\begin{algorithmic}
\FOR{cell $K \in \cT_h^b$ }
	\IF{ $\phi(z_1) > 0$}
		\IF{ $\phi((z_1 + z_2)/2) \ge 0$}
		\STATE{Set Type as 'a'}
		\STATE{Find intersection points $z_3$--$z_5$ and form 2 triangles as as shown in 
			\cref{fig:cell-type1}}
		\ELSE
		\STATE{Set Type as 'b'}
		\STATE{Find intersection points $z_3$--$z_6$ and form 4 triangles as as in \cref{fig:cell-type4}}
		\ENDIF
	\ELSIF{$\phi(z_1)<0$}
		\IF{ $\phi( (z_0 + z_1)/2 ) < 0$}
		\STATE{Set Type as 'c'}
		\STATE{Find intersection points $z_3$--$z_5$ and form 4 triangles as as shown in 
			\cref{fig:cell-type2}}
		\ELSE
		\STATE{Set Type as 'd'}
		\STATE{Find intersection points $z_3$--$z_6$ and form 2 triangles as as in \cref{fig:cell-type3}}
		\ENDIF
	\ELSE
		\STATE{Set Type as 'e'}
		\STATE{Find intersection point $z_3$ and form 1 triangle as in in \cref{fig:cell-type5}}
	\ENDIF
\ENDFOR
\end{algorithmic}
\end{algorithm}

\begin{figure}[h!]
    \centering
    \subfloat[]{\includegraphics[width=0.18\textwidth]{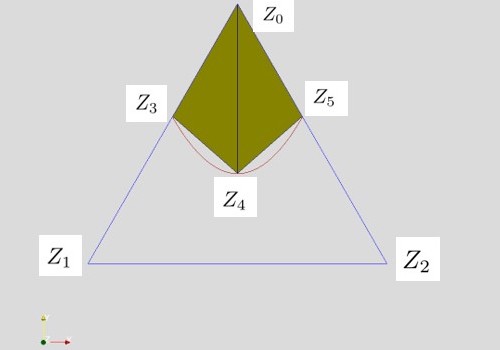} 
       \label{fig:cell-type1}}
                  \hfill
        \subfloat[]{\includegraphics[width=0.18\textwidth]{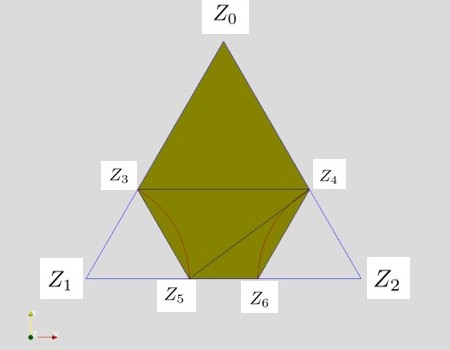}
        \label{fig:cell-type4}}
            \hfill
        \subfloat[]{\includegraphics[width=0.18\textwidth]{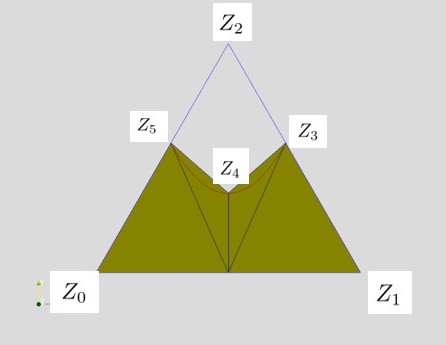}
        \label{fig:cell-type2}}
    \hfill
        \subfloat[]{\includegraphics[width=0.18\textwidth]{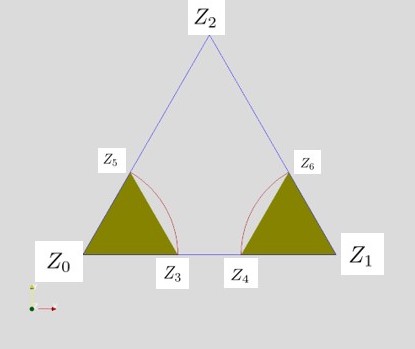}
        \label{fig:cell-type3}}
                   \hfill
        \subfloat[]{\includegraphics[width=0.18\textwidth]{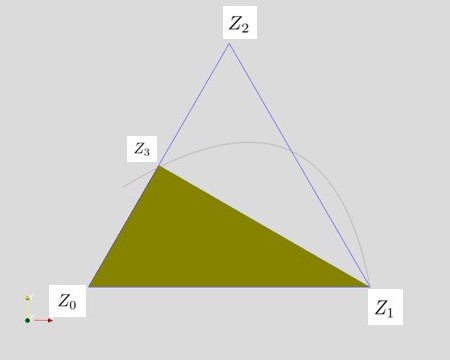}
        \label{fig:cell-type5}}
\caption{Partition of the intersection cell based on the cut of level set.}
\end{figure}  
\begin{figure}[h!]\label{fig:example-bc-mesh}
\centering
 \includegraphics[width=0.50\textwidth]{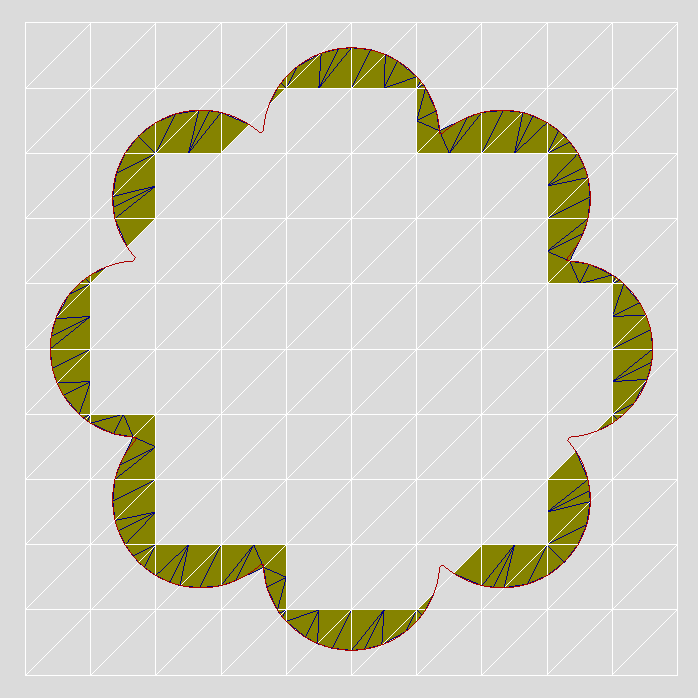}
 \caption{An example of the boundary correction mesh for Example 1}

\end{figure}
With the mesh $\cT^{bc}$ available,  we define $\tilde e$ to be  a piecewise linear conforming function with respect to $\cT_h^{bc}$ such that 
\[
	\tilde e(z) = \begin{cases}
	g(z) - u_h(z) & \mbox{if }  z \in \partial \O,\\[2mm]
	0 & \mbox{otherwise}.
	\end{cases}
\]
\begin{remark}
It is critical that $\tilde e$ is properly designed so that the
boundary correction does not over-estimate the error due to the
underresolved geometry. In our construction, it is easy to see that $\tilde e = \tilde g -u_h$ on $\partial \O$ where $\tilde g$ is the nodal interpolation of $g$ with respect to $\cT_h^{bc}$ on $\partial \O$.
Then we have that
\[
	\| \tilde g - u_h \|_{1/2,\partial \O} \le \| \nabla \tilde e\|_{\O} \lesssim \| \tilde g-u_h\|_{1/2,\partial \O}.
\]
The second inequality follows from the equivalence of norms.

\end{remark}

\subsection{Numerical Examples}
\begin{example} \label{ex:1}
The level set of the problem has a flower shape (see \cref{fig:example-bc-mesh})  that has the 
following representation:
\[
	\phi = \min(\phi_0, \phi_1, \cdots, \phi_8)
\]
with
\[
	\begin{cases}
	\phi_0(x,y)= x^2 + y^2 - r^2, & r=2\\
	\phi_i(x,y) = (x - x_i)^2 + (y - y_i)^2 - r_i^2, &r_i = \sqrt{2} r*(\sin( \pi/8) + \cos(\pi/8) ) \sin(\pi/8)
	\end{cases}
\]
for $i = 1, \cdots, 8$, $x_i = r(\cos(\pi/8)+\sin(\pi/8))\cos(i*\pi/4)$ and 
$y_i = r(\cos(\pi/8)+\sin(\pi/8))\cos(i*\pi/4)$.
The domain $\O$ is defined as $x \in \mathbb{R}^2$ such that $\phi(x) \le 0$. 
The data are given such that $g = 0$ on $\partial \O$ and 

\[   
	f(x,y) =\left\{
	\begin{array}{lll}
      	10 & \mbox{if }  (x - x_1)^2 + (y - y_1)^2 \le r_1^2/2.\\
	0 & \mbox{otherwise}.
\end{array} 
\right. \]
\end{example}

In the numerical scheme, we take $g_h=0$.
With the stopping criteria that the total number of degree of freedoms be not greater than $7000$, the final meshes obtained without and with adding the boundary correction term are given $\| \nabla \tilde e\|$  in 
\cref{fig:mesh1-1} and \cref{fig:mesh1-2}, respectively. We observe that slightly more degree of freedoms are added around the concave corners  in \cref{fig:mesh1-2}.
The corresponding  convergence performances of each adaptive procedure
are given in 
\cref{fig:ex1-error-comparison-bv}. We observe that both estimators converge optimally when the mesh become fine enough. Adding the boundary correction term does slightly increases the estimator at the initial stage. However, the weight is diminishing as the mesh gets finer.

%
\begin{figure}
    \centering
    \subfloat[]{\includegraphics[width=0.45\textwidth]
    {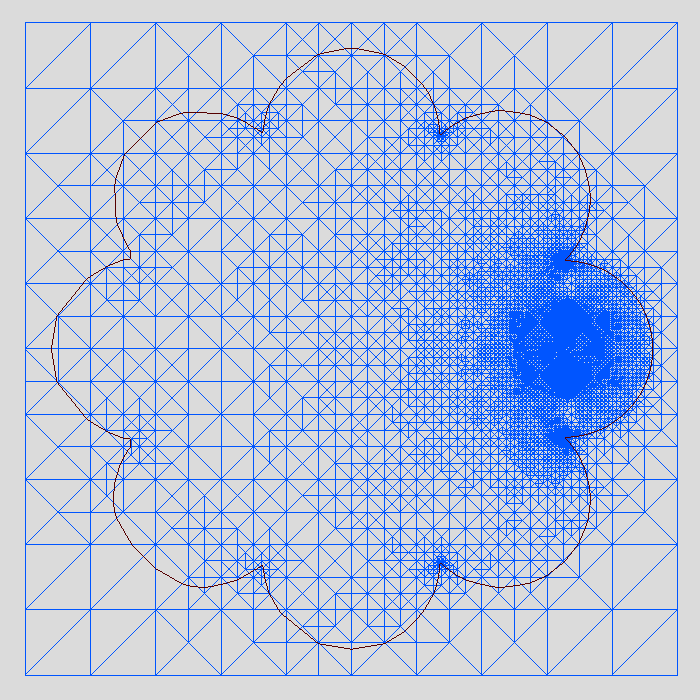} 
        \label{fig:mesh1-1}}
    \hfill
        \subfloat[]{\includegraphics[width=0.45\textwidth]{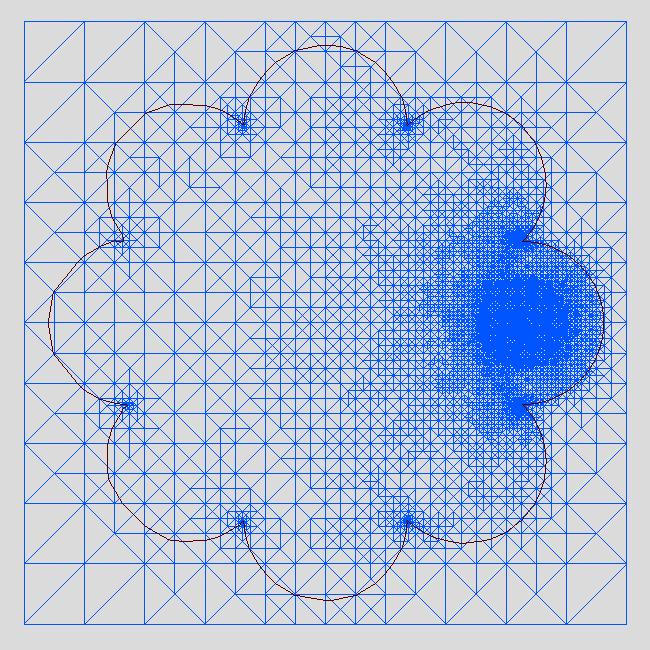}
        \label{fig:mesh1-2}}
\caption{Final meshes generated without and with $\| \nabla \tilde e\|_K$.}
\end{figure}

\begin{figure}[h!]
\centering
\includegraphics[width=0.6\textwidth]{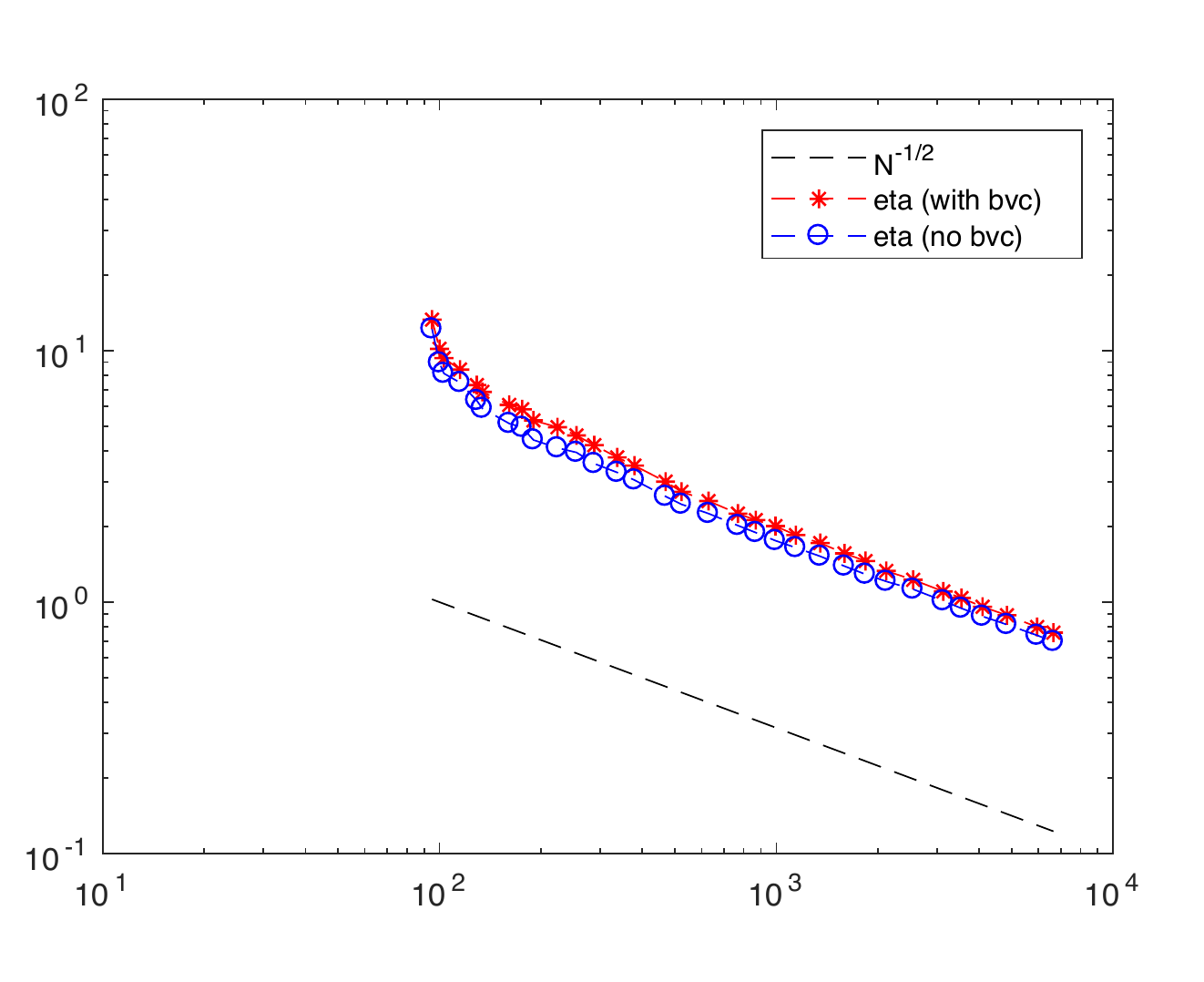}
    \label{fig:ex1-error-comparison-bv}    
\caption{Estimator convergence rate without and with $\| \nabla \tilde e\|$}
\end{figure}

\begin{example}\label{ex:2}
The level set of this problem has the following representation:
\[
	\phi = \max(\phi_0, -\phi_1, \cdots, -\phi_8)
\]
with $\phi_0$ to $\phi_8$ defined the same as in \cref{ex:1}. The datum are chosen such that 
\[f =0 \quad \mbox{and} \quad g = y^2.\]
In the numerical scheme we approximate the boundary by $\O_h$ defined in \cref{domain-approx}. For the Dirichlet date we take $g_h$ to be the conforming piecewise linear interpolation of $g =y^2$.
With the stopping criteria that the total number of degree of freedoms be not greater than $7000$, the final meshes obtained without and with adding the boundary correction term $\| \nabla \tilde e\|_K$  are given in 
 \cref{fig:ex2-mesh1-1} and \cref{fig:ex2-mesh1-2}, respectively.  The
 corresponding  convergence performance of each adaptive procedure are
 given in 
\cref{fig:ex2-error-comparison-bv}. We observe that in both cases the estimator converges optimally. In \cref{fig:ex2-mesh1-2}  note that there is no more obvious dense refinement comparing to \cref{fig:ex2-mesh1-1}  around the boundary including the corners since in this case the approximation error $g-g_h$ is of uniform order everywhere.

\begin{figure}
    \centering
    \subfloat[]{\includegraphics[width=0.46\textwidth]{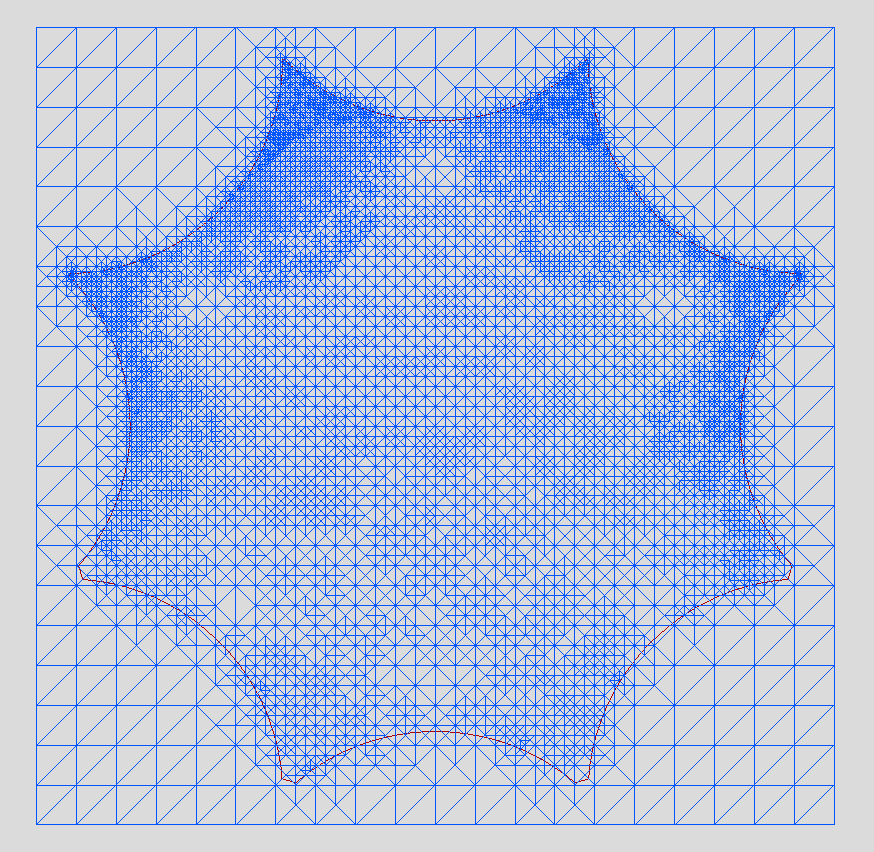} 
        \label{fig:ex2-mesh1-1}
}
     \subfloat[]{\includegraphics[width=0.45\textwidth]{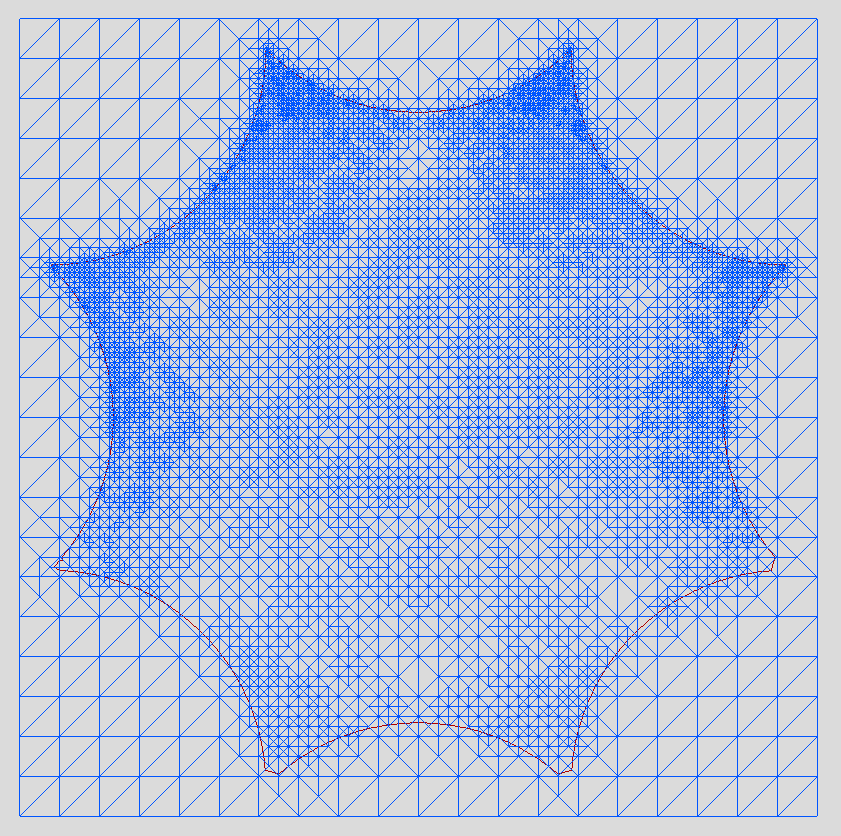}
        \label{fig:ex2-mesh1-2}
}
\caption{Final meshes generated without and with $\| \nabla \tilde e\|_K$.}
\end{figure}

\begin{figure}
\centering
\includegraphics[width=0.6\textwidth]{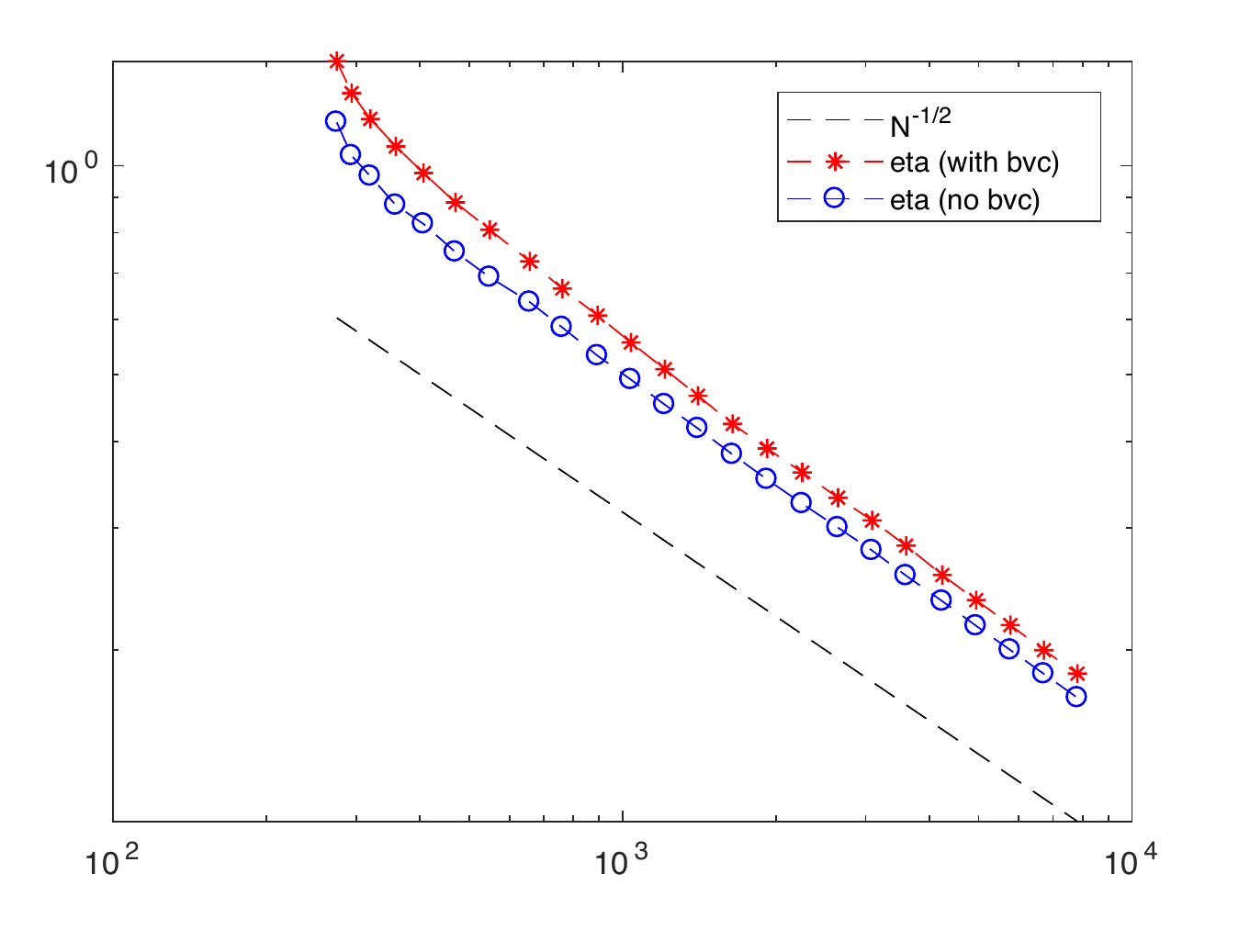}
    \label{fig:ex2-error-comparison-bv} 
\caption{estimator convergence rate without and with $\| \nabla \tilde e\|$}
\end{figure}
\end{example}

\subsection{Example 3}
In this example we consider the reentrant problem whose solution has the following polar representation:
\[
	u(r, \theta) = r^{\alpha} \sin(\alpha \theta),
\]
with $\alpha = \pi/ \omega$ and $\omega$ being the angle of the reentrant corner. In
this example, we test two values for $\omega$, i.e., $31\pi/8$ and
$63\pi/16$. The stopping criteria is set such that the maximal number
of degrees of freedom does not exceed $5000$.
In the numerical scheme we approximate the Dirichlet datum $g$ by $g_h$ using conforming piecewise linear interpolation.
The final meshes generated without adding the boundary correction error are given in 
\cref{fig:ex3-mesh1}--\cref{fig:ex3-mesh2} and with the boundary correction error are given in 
\cref{fig:ex3-mesh3}--\cref{fig:ex3-mesh4}. 
The corresponding convergence rate of estimators  are presented in 
\cref{fig:ex3-error3}--\cref{fig:ex3-error4}. 
We again observe the optimal convergence performance for the estimator and true error in all cases. This indicates that the estimator, with or without the boundary correction error, works equivalently effective for problem even with singularity on the boundary.

\begin{figure}[h!]
    \centering
    \subfloat[]{\includegraphics[width=0.45\textwidth]
    {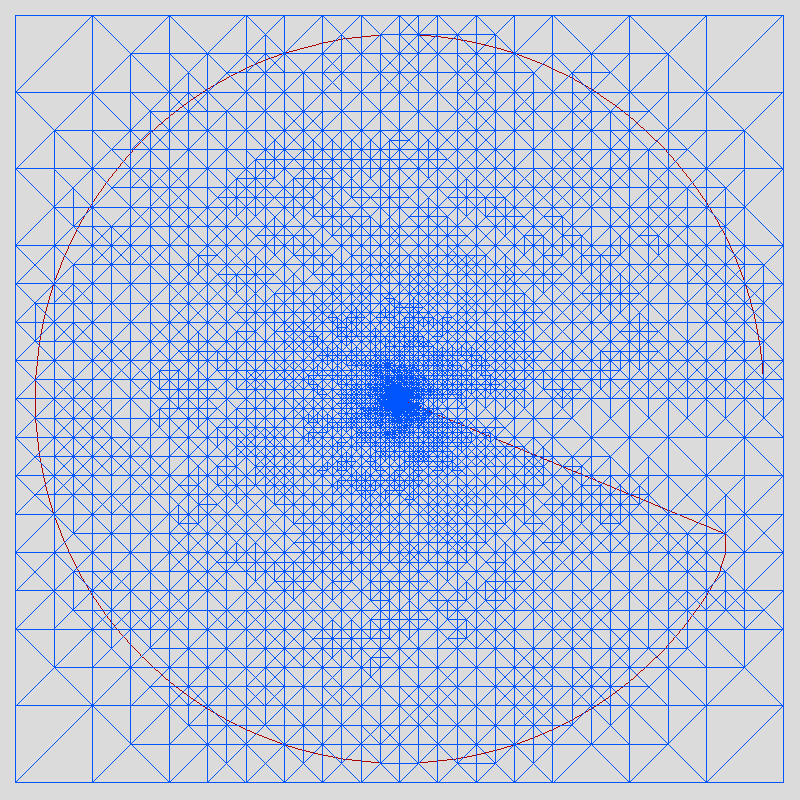} 
        \label{fig:ex3-mesh1}}
    \hfill
        \subfloat[]{\includegraphics[width=0.45\textwidth]{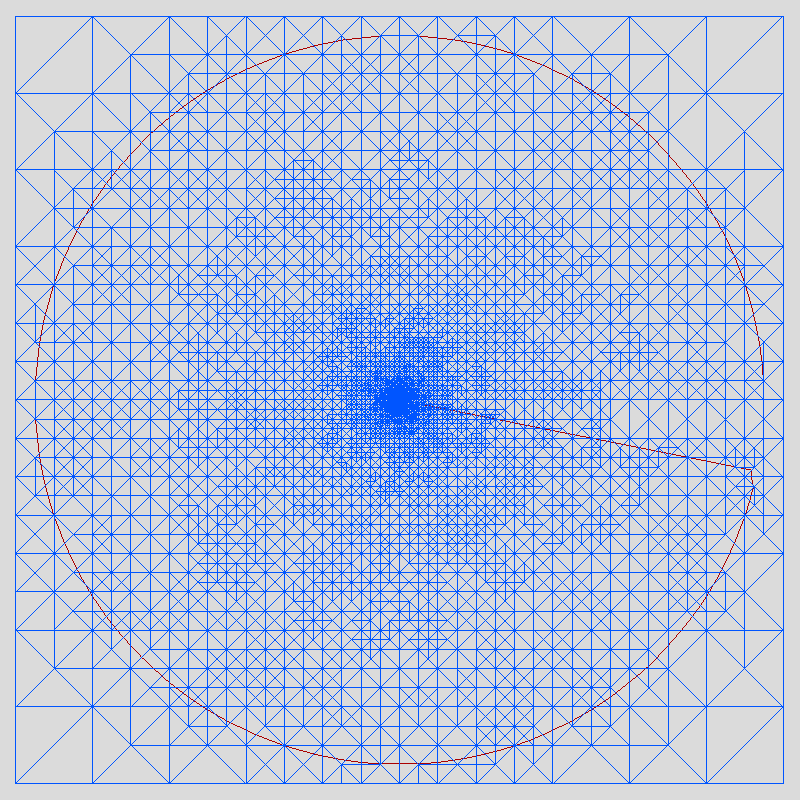}
        \label{fig:ex3-mesh2}}
\caption{Final meshes generated without $\| \nabla \tilde e\|_K$. Left: $\omega=31\pi/8$. Right:  $\omega=63\pi/16$.}
\end{figure}


\begin{figure}[h!]
    \centering
    \subfloat[]{\includegraphics[width=0.45\textwidth]
    {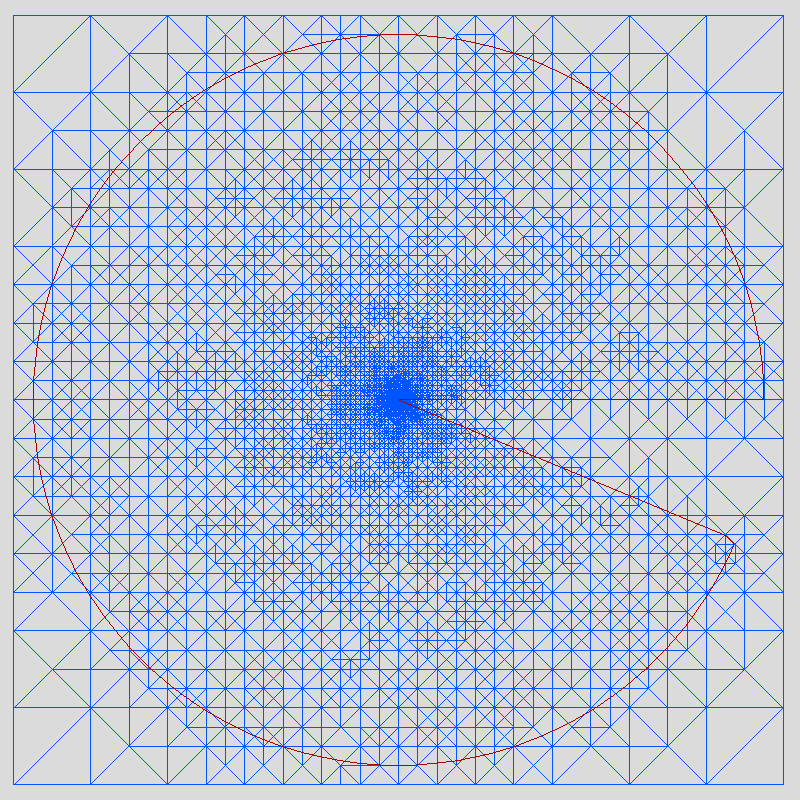} 
        \label{fig:ex3-mesh3}}
    \hfill
        \subfloat[]{\includegraphics[width=0.45\textwidth]{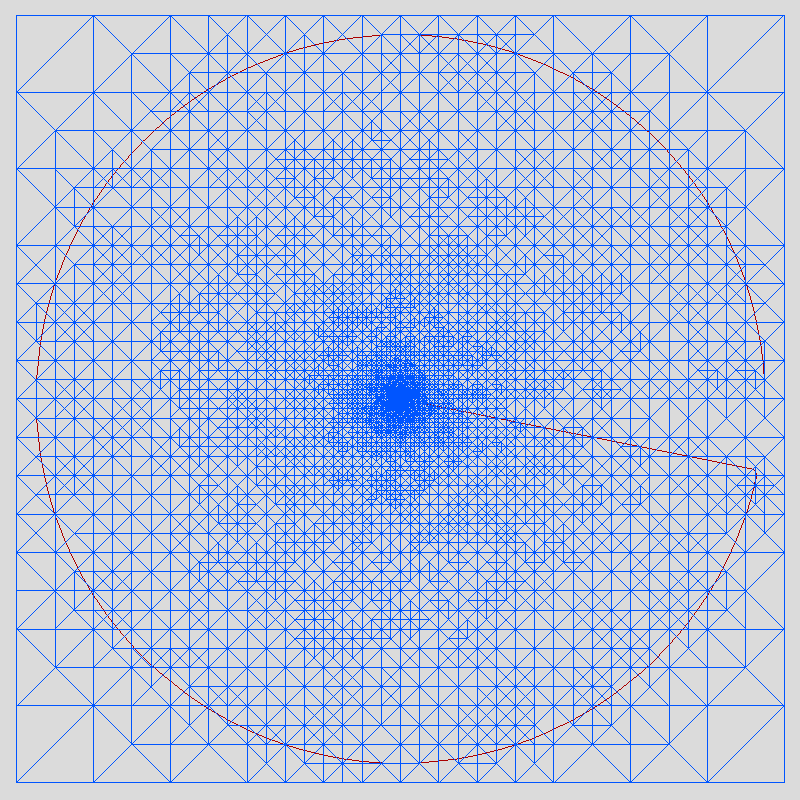}
        \label{fig:ex3-mesh4}}
\caption{Final meshes generated with $\| \nabla \tilde e\|_K$.  Left: $\omega=31\pi/8$. Right:  $\omega=63\pi/16$.}
\end{figure}

\begin{figure}[h!]
    \centering
    \subfloat[]{\includegraphics[width=0.48\textwidth]
    {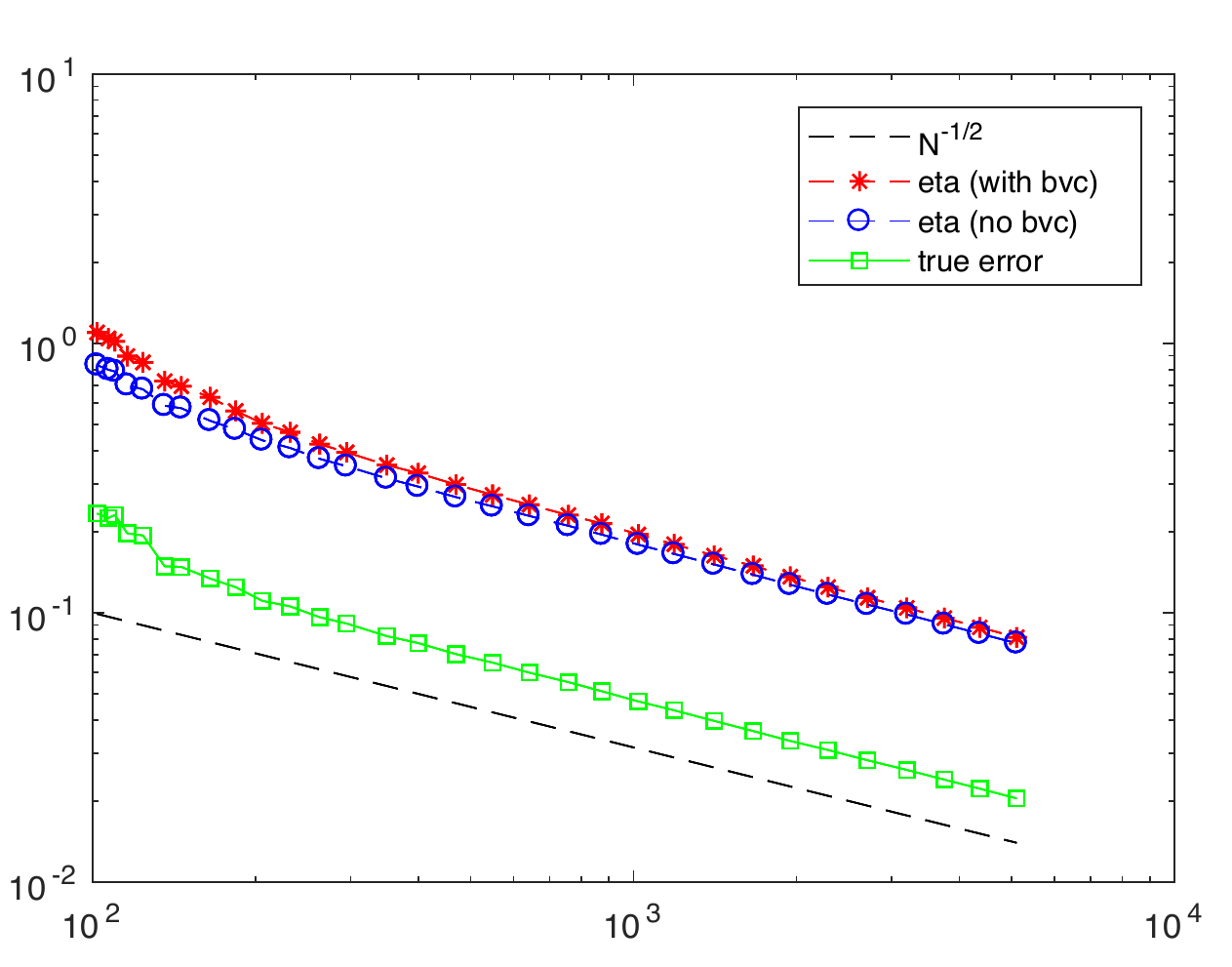} 
        \label{fig:ex3-error3}}
        \subfloat[]{\includegraphics[width=0.45\textwidth]{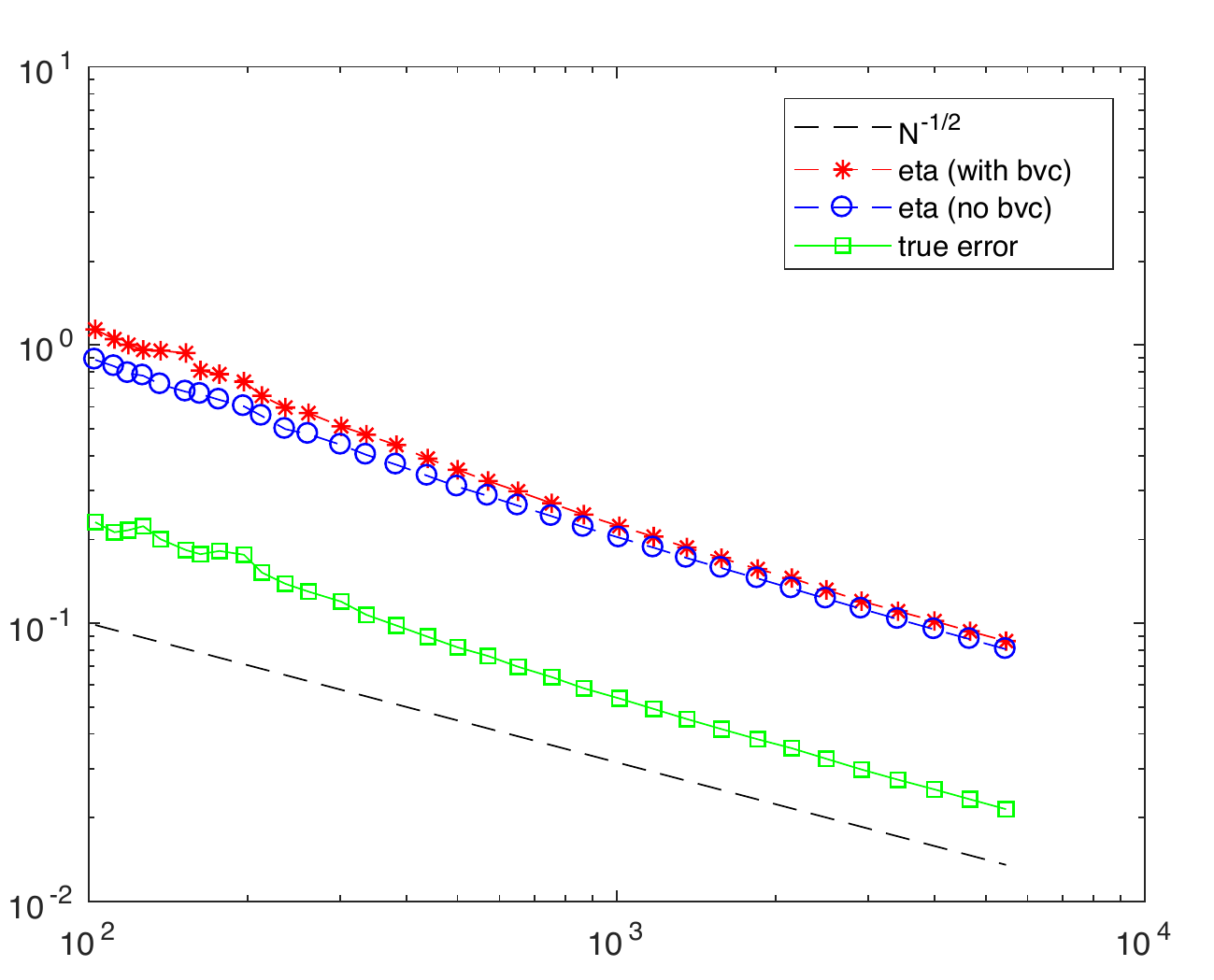}
        \label{fig:ex3-error4}}
\caption{Estimator convergence performance without and with $\| \nabla \tilde e\|$. Left: $\omega=31\pi/8$. Right:  $\omega=63\pi/16$.}
\end{figure}

We now test the same procedure but with $g_h$ to be the piecewise
constant interpolation of $g$ for $w = 31 \pi/16$. From the final meshes generated in \cref{fig:ex3-mesh5} and \cref{fig:ex3-mesh6} we observe that more degree of freedoms are added around the boundary in both cases due to the poorer approximation of the boundary data. The estimators, however, still converges optimally in both cases when the meshes are fine enough.

\begin{figure}[h!]
    \centering
    \subfloat[]{\includegraphics[width=0.48\textwidth]
    {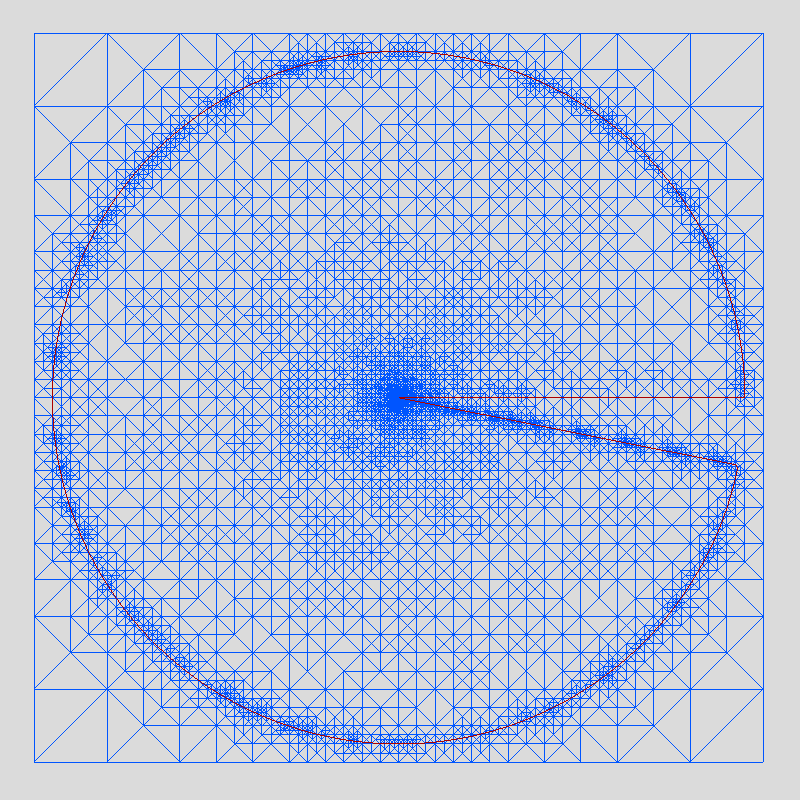} 
        \label{fig:ex3-mesh5}}
        \subfloat[]{\includegraphics[width=0.47\textwidth]{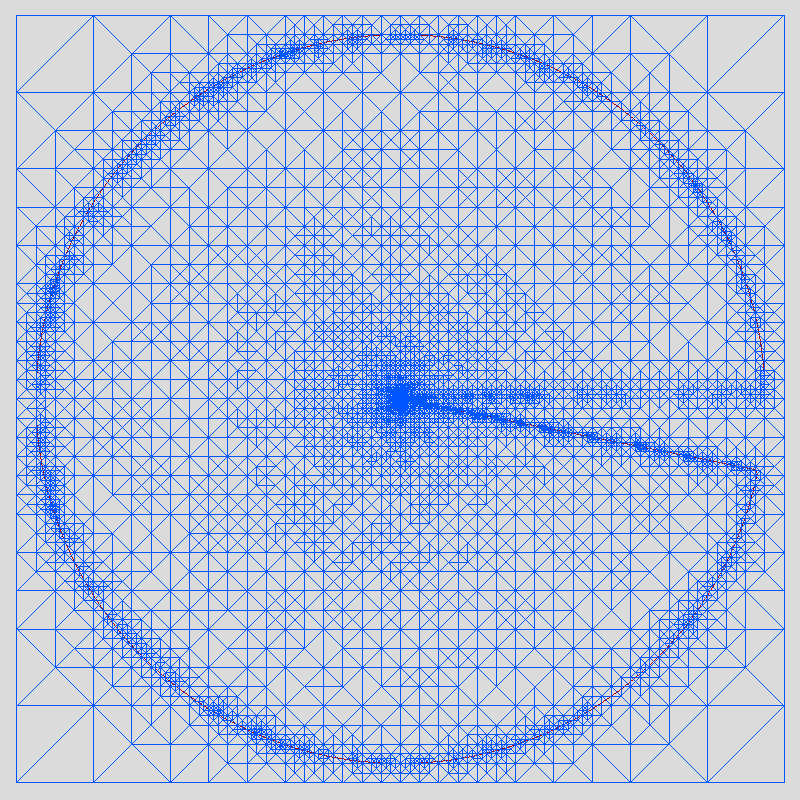}
        \label{fig:ex3-mesh6}}
\caption{Final meshes without and with $\| \nabla \tilde e\|$.}
\end{figure} 

\begin{figure}[h!]
    \centering
\includegraphics[width=0.6\textwidth]{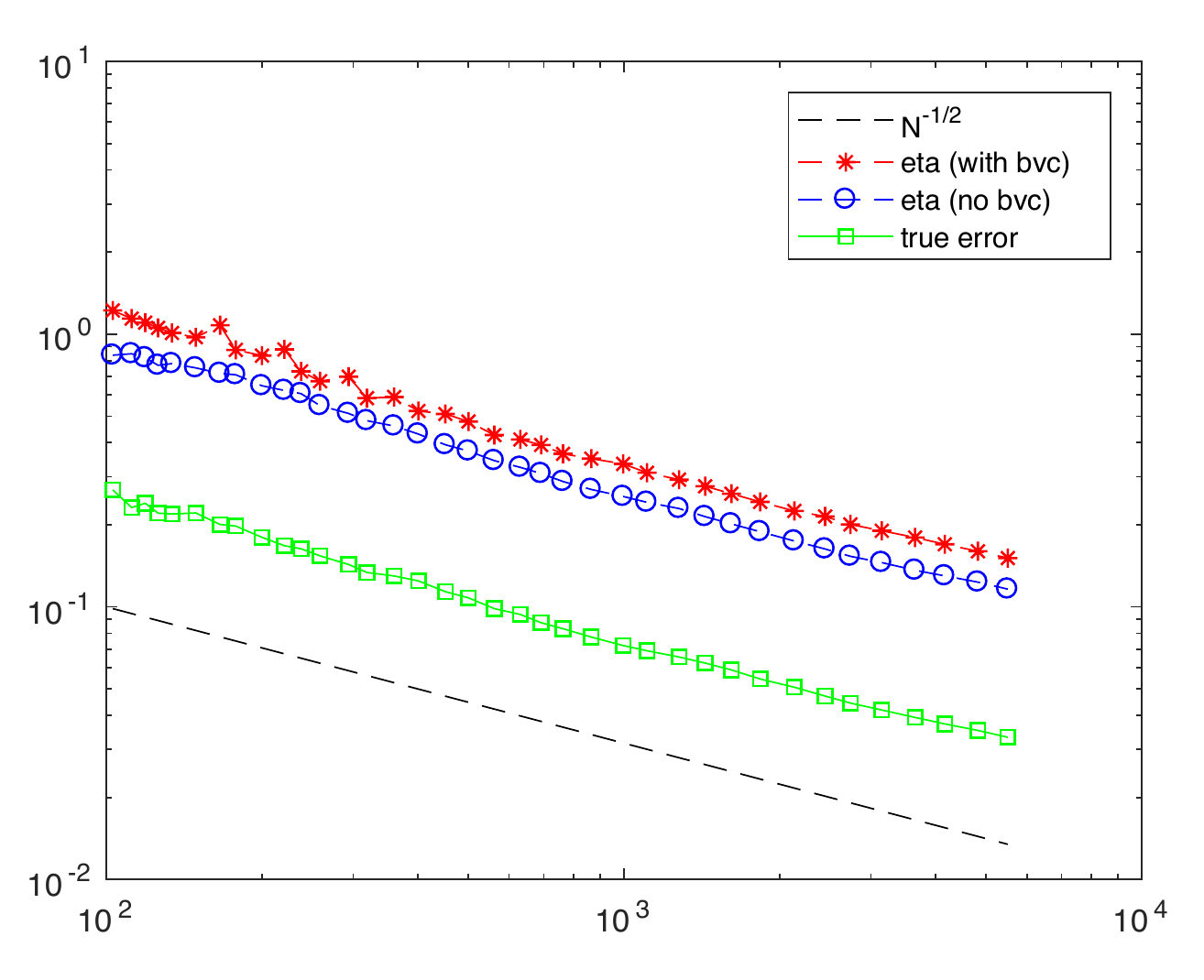}
        \label{fig:ex3-error6}
\caption{Convergence performance for estimator without and with boundary correction.}
\end{figure}

\subsection{Example 4}
In this example, we consider the problem that has the following representation:
\[
	u(x,y)= r^{\alpha} \sin(\alpha \theta)
	+ exp(100* ((x - 0.5)^2+ (y - 0.5)^2))
\]
with $r = \sqrt{x^2 + y^2}$, $\theta = \tan^{-1}(y/x) $, $\alpha = \pi/ \omega$ and $\omega = 31\pi/16$. 
This problem has multiple singularities with one reentrant corner at $(0.0)$ and one peak at $(0.5, 0.5)$.
In the numerical scheme, $g_h$ and $f_h$ are taken as the linear nodal interpolation of $g$ and $f$, respectively.  
The stopping criteria is set to not exceed the maximal refinement step
of $50$ and the maximal  number of degrees of freedom of $7500$.
The final meshes are given in 
\cref{fig:ex4-mesh1} and \cref{fig:ex4-mesh2} and
 their corresponding convergence performance are presented in 
\cref{fig:ex4-error2}. 

\begin{figure}[ht]
    \centering
    \subfloat[]{\includegraphics[width=0.40\textwidth]
    {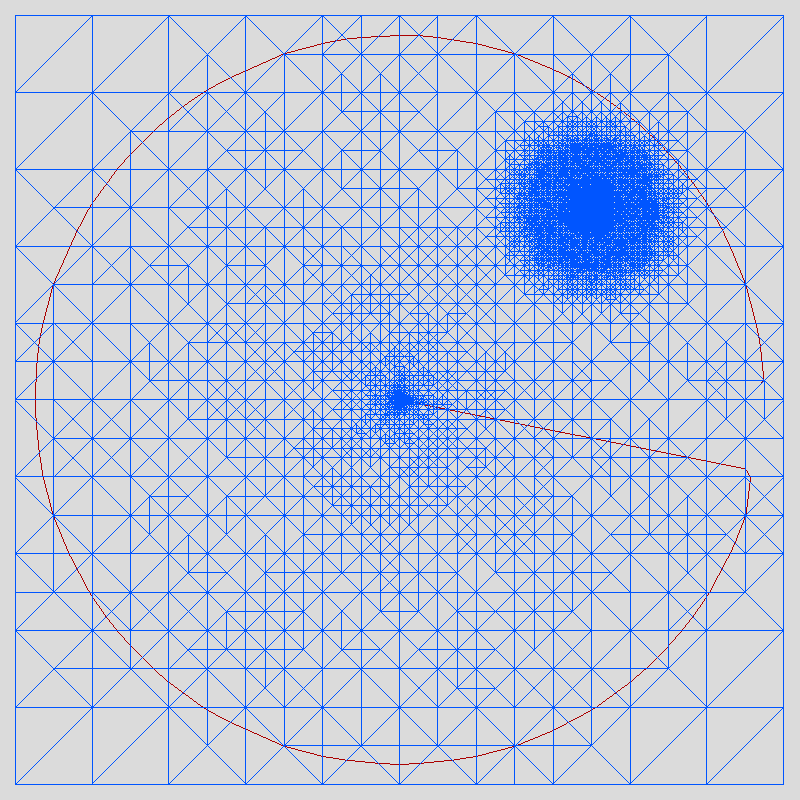} 
        \label{fig:ex4-mesh1}}
    \hfill
        \subfloat[]{\includegraphics[width=0.40\textwidth]{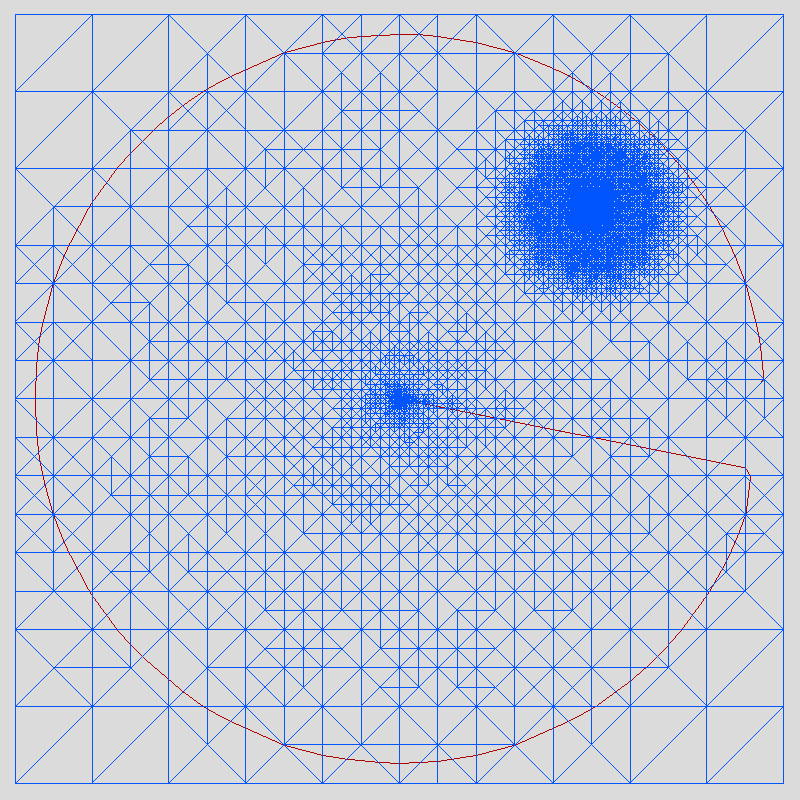}
        \label{fig:ex4-mesh2}}
\caption{Final meshes generated without (left) and with (right) $\| \nabla \tilde e\|_K$.}
\end{figure}

\begin{figure}[ht]
    \centering
\includegraphics[width=0.6\textwidth]{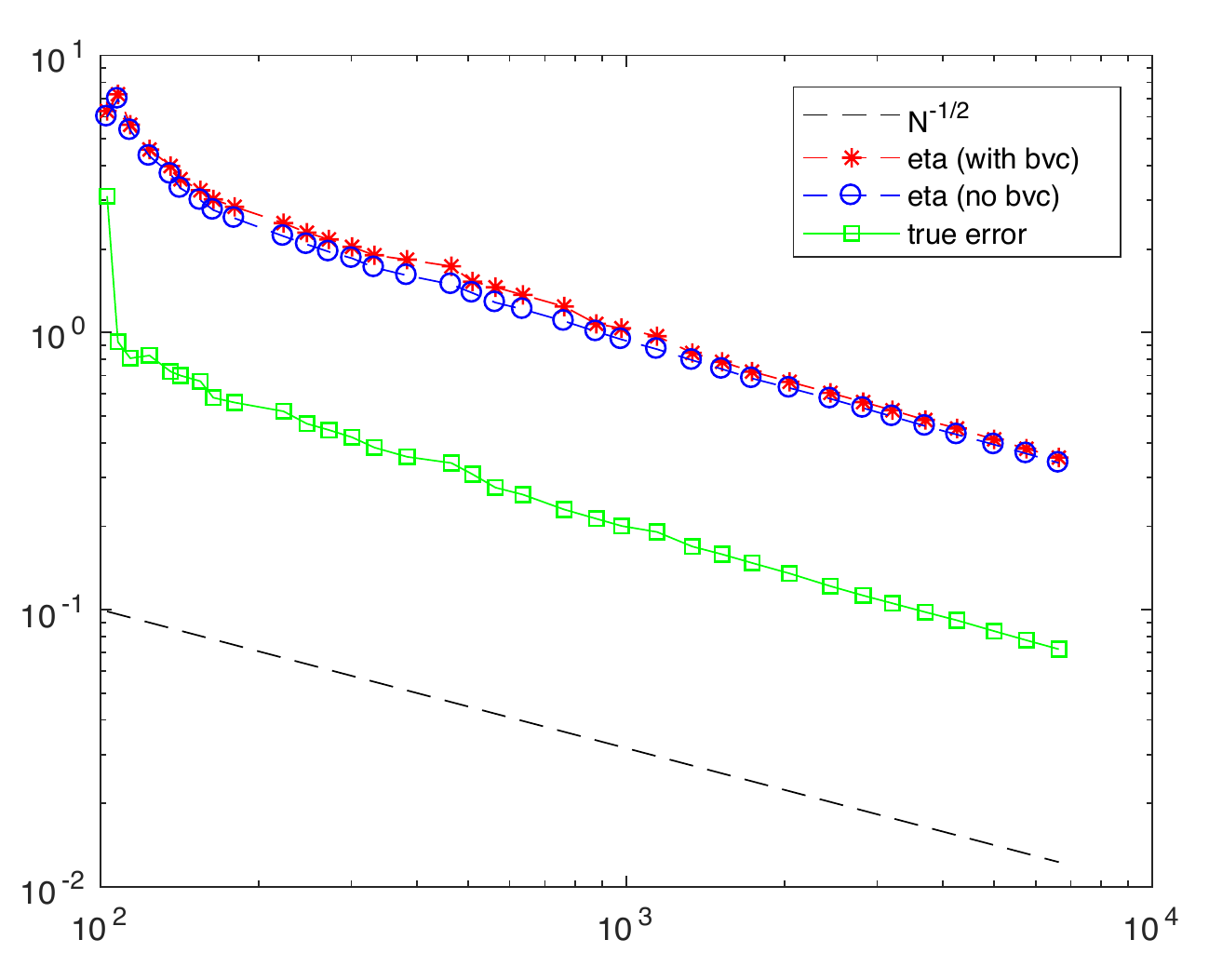}
        \label{fig:ex4-error2}
\caption{Convergence performance for estimator without and with $\| \nabla \tilde e\|$.}
\end{figure}

This example shows that the algorithm works also effectively when multiple singularities occurs, no matter the singularity happens on the boundary or inside the domain.

\paragraph{Acknowledgement} EB and CH were supported by EPSRC, UK,
Grant No. \\ EP/P01576X/1. ML was supported by the Swedish Foundation for Strategic Research Grant No.\ AM13-0029, the Swedish Research Council Grants No. 2017-03911, and the Swedish strategic research programme eSSENCE.


\bibliography{BDT}
\end{document}